\newtheorem{lemma}{Lemma}[section]
\newtheorem{theorem}[lemma]{Theorem}
\newtheorem{proposition}[lemma]{Proposition}
\newtheorem{corollary}[lemma]{Corollary}
\renewenvironment{proof}[1][\proofname]{{\noindent\bf #1. }}{\qed}
\newtheorem{theoremletters}{Theorem}
\theoremstyle{definition}
\newtheorem{remark}[lemma]{Remark}
\newcommand{\abs}[1]{\ensuremath{|#1|}}
\newcommand{\op}{\operatorname}
\newcommand{\ce}[2]{\pmb{\op{C}}_{#1}(#2)}
\newcommand{\ze}[1]{\pmb{\op{Z}}(#1)}
\newcommand{\fit}[1]{\pmb{\op{F}}(#1)}
\newcommand{\rad}[2]{\pmb{\op{O}}_{#1}(#2)}
\newcommand{\syl}[2]{\op{Syl}_{#1}\hspace*{-0.7mm}\left(#2\right)}
\newcommand{\hall}[2]{\op{Hall}_{#1}\hspace*{-0.7mm}\left(#2\right)}
\begin{document}

\title{\bf Groups with triangle-free graphs on $p$-regular classes}

\author{\sc  M.J. Felipe $^{*}$ $\cdot$ M.K. Jean-Philippe $^{\diamond}$ $\cdot$ V. Sotomayor
\thanks{Instituto Universitario de Matemática Pura y Aplicada (IUMPA-UPV), Universitat Polit\`ecnica de Val\`encia, Camino de Vera s/n, 46022 Valencia, Spain. \newline \Letter: \texttt{mfelipe@mat.upv.es, vsotomayor@mat.upv.es}\newline 
ORCID: 0000-0002-6699-3135, 0000-0001-8649-5742  \newline
\indent $^{\diamond}$Departamento de Matemáticas, Instituto Tecnológico de Santo Domingo (INTEC) and Universidad Autónoma de Santo Domingo (UASD), Santo Domingo, Dominican Republic. \newline \Letter: \texttt{1097587@est.intec.edu.do; mjean46@uasd.edu.do}
\newline \rule{6cm}{0.1mm}\newline
The first and last authors are supported by Ayuda a Primeros Proyectos de Investigación (PAID-06-23) from Vice\-rrectorado de Investigación de la Universitat Politècnica de València (UPV), and by Proyecto CIAICO/2021/163 from Generalitat Valenciana (Spain). The results in this paper are part of the second author's Ph.D. thesis.\newline
}}

\date{}

\maketitle

\begin{abstract}
\noindent Let $p$ be a prime. In this paper we classify the $p$-structure of those finite $p$-separable groups such that, given any three non-central conjugacy classes of $p$-regular elements, two of them necessarily have coprime lengths. 

\medskip

\noindent \textbf{Keywords} Finite groups $\cdot$ Conjugacy classes $\cdot$ $p$-regular elements $\cdot$ Common divisor graph $\cdot$ Triangle-free graph

\smallskip

\noindent \textbf{2020 MSC} 20E45 $\cdot$ 20D20 
\end{abstract}


\section{Introduction}

In the following discussion every group is implicitly assumed to be a finite group. Within the theory of finite groups, one of the classical research areas is the study of how the arithmetical properties of the lengths of the conjugacy classes of a group strongly constrain its algebraic structure and vice versa. This intriguing problem has been a target of interest from the nineties up to the present. In this context, an approach that has proven to be very effective to capture arithmetical features of the class lengths of a group $G$ is the \emph{common divisor graph} $\Gamma(G)$. This graph, originally defined by E.A. Bertram, M. Herzog and A. Mann in 1990 (\emph{cf.} \cite{BHM}), is defined as follows: the vertices are the non-central conjugacy classes of $G$, and two classes are adjacent whenever their cardinalities have a common prime divisor. Several results in the literature illustrate that graph-theoretical features of $\Gamma(G)$ are linked to the structure of $G$.

Later on many variations on this topic were introduced by several authors. For instance, A. Beltrán and M.J. Felipe defined in the early 2000s (\emph{cf.} \cite{BF1}) the graph $\Gamma_p(G)$ of a $p$-separable group $G$, for a given prime $p$: the vertices of this graph are the non-central conjugacy classes of $G$ of $p$-regular elements (\emph{i.e.} elements with order not divisible by $p$), and two of them are joined by an edge if and only if their lengths are not coprime. In other words, the graph $\Gamma_p(G)$ is the subgraph of $\Gamma(G)$ induced by those vertices whose representatives have order not divisible by $p$. Several theorems have put forward that certain arithmetical properties of the lengths of these classes are also reflected in the local structure of $G$, and a detailed account on this topic can be found in the expository paper \cite{BFsurvey}.

Both of the above graphs have at most two connected components, and the diameter (in the connected case) is at most three, although these properties cannot be directly derived from the ordinary graph $\Gamma(G)$ to its subgraph $\Gamma_p(G)$ of $p$-regular classes. Indeed, it is worthwhile to mention that, generally, there is no divisibility relation between the class length of an element in a $p$-complement $H$ of $G$ (\emph{i.e. }$H$ is a Hall $p'$-subgroup of $G$) and its corresponding class length in $G$. To illustrate this, take as $G$ the affine semilinear group over the field of eight elements with $p=7$: the class length of an involution in $H$ is $3$, while its class length in $G$ is $7$. Moreover, in this last example $\Gamma(H)$ is non-connected and $\Gamma_p(G)$ is connected, so the structure of both graphs may significantly differ.

Some results for the ordinary graph $\Gamma(G)$ cannot be generalised for the subgraph $\Gamma_p(G)$. For instance, $\Gamma(G)$ is non-connected if and only if $G$ is quasi-Frobenius with abelian kernel and complements (\emph{cf.} \cite{BHM, K}). Recall that $G$ is said to be quasi-Frobenius if $G/\ze{G}$ is Frobenius, and the inverse images in $G$ of the kernel and complements of $G/\ze{G}$ are called the kernel and complements of $G$. It is natural then to ask which local conditions are necessary or sufficient for a $p$-separable group $G$ to have $\Gamma_p(G)$ non-connected. For instance, inspired by the ordinary case, is $\Gamma_p(G)$ non-connected when a $p$-complement of $G$ is quasi-Frobenius with abelian kernel and complements? This is certainly false, and a counterexample is given by the affine semilinear group described in the above paragraph. In fact this question looks more articulate, and a wider analysis was taken further by Beltrán and Felipe in \cite{BF3}. Nevertheless, a complete understanding of the $p$-structure of groups $G$ with $\Gamma_p(G)$ non-connected has not been reached, since there are still some unsolved problems (see Remark \ref{remark_intro} below).

This paper is a contribution to a better understanding of how the local structure of a $p$-separable group $G$ is influenced by the graph-theoretical properties of $\Gamma_p(G)$, for a given prime $p$. More concretely, our goal is to classify the structure of the $p$-complements of $G$ when $\Gamma_p(G)$ contains no triangles. This research is motivated by a result due to M. Fang and P. Zhang (\emph{cf.} \cite{FZ}), who completely obtained the groups $G$ whose ordinary graph $\Gamma(G)$ has no triangles: the symmetric group of degree $3$, the dihedral group of order $10$, the three pairwise non-isomorphic non-abelian groups of order $12$, and the non-abelian group of order $21$. Our main result is the following.

\begin{theoremletters}
\label{teoA}
Let $G$ be a $p$-separable group, for a given prime $p$, and let $H$ be a non-central $p$-complement of $G$. If $\Gamma_p(G)$ has no triangles, then one of the following cases holds:
\vspace*{-2mm}
\begin{enumerate}
\setlength{\itemsep}{-1mm}
\item[\emph{i)}] $H$ is a $q$-group, for some prime $q\neq p$.
\item[\emph{ii)}] $H$ is a quasi-Frobenius $\{q,r\}$-group with abelian kernel and complements for certain primes $q$ and $r$ both distinct from $p$, and $\ze{H}=H\cap\ze{G}$ is either trivial or cyclic of order two.
\item[\emph{iii)}] $H$ is isomorphic to $(C_5\times C_5)\rtimes Q_8$.
\end{enumerate}
\vspace*{-2mm}
\noindent In particular $G$ is soluble, and $\Gamma_p(G)$ coincides with one of the next six graphs.

\begin{center}
\begin{tikzpicture}
  \coordinate (A) at (1,0);
  \coordinate (B) at (2,0);
  \coordinate (C) at (0.5,-1);
  \coordinate (D) at (1.5,-1);
  \coordinate (E) at (2.5,-1);
  \coordinate (F) at (0,-2);
  \coordinate (G) at (1,-2);
  \coordinate (H) at (2,-2);
  \coordinate (I) at (3,-2);
  \coordinate (J) at (9,0);
  \coordinate (K) at (8.5,-1);
  \coordinate (L) at (9.5,-1);
  \coordinate (M) at (8,-2);
  \coordinate (N) at (9,-2);
  \coordinate (O) at (10,-2);

  \fill (A) circle (2pt);
  \fill (B) circle (2pt);
  \fill (C) circle (2pt);
  \fill (D) circle (2pt);
  \fill (E) circle (2pt);
  \fill (F) circle (2pt);
  \fill (G) circle (2pt);
  \fill (H) circle (2pt);
  \fill (I) circle (2pt);
  \fill (J) circle (2pt);
  \fill (K) circle (2pt);
  \fill (L) circle (2pt);
  \fill (M) circle (2pt);
  \fill (N) circle (2pt);
  \fill (O) circle (2pt);
  
  \draw (D) -- (C);
  \draw (F) -- (G);
  \draw (H) -- (I);
  \draw (K) -- (L);
  \draw (M) -- (N) -- (O);
  
  \node[above] at (1,-3.4) {{\small $\Gamma_p(G)$ non-connected and triangle-free}};
  \node[above] at (8.5,-3.4) {{\small $\Gamma_p(G)$ connected and triangle-free}};
  \node[left] at (-1,0) {{\small \emph{(a)}}};
  \node[left] at (-1,-1) {{\small \emph{(b)}}};
  \node[left] at (-1,-2) {{\small \emph{(c)}}};
    \node[left] at (7,0) {{\small \emph{(d)}}};
  \node[left] at (7,-1) {{\small \emph{(e)}}};
  \node[left] at (7,-2) {{\small \emph{(f)}}};
\end{tikzpicture}
\end{center}
Case \emph{i)} corresponds to \emph{(d)} and \emph{(e)}, case \emph{ii)} corresponds to \emph{(a)}, \emph{(b)}, \emph{(c)} and \emph{(e)}, and case \emph{iii)} corresponds to \emph{(f)}.
\end{theoremletters}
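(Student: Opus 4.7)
The plan is to analyze the Hall $p'$-subgroup $H$ of $G$ via $\Gamma_p(G)$. The crucial input is the standard $p$-separable correspondence: every $p$-regular $G$-class has a representative $x\in H$, and if $H$ is chosen so that $H\cap\ce{G}{x}$ is a Hall $p'$-subgroup of $\ce{G}{x}$, then $|x^H|$ equals the $p'$-part of $|x^G|$. In particular, any common prime divisor of $|x^H|$ and $|y^H|$ also divides $\gcd(|x^G|,|y^G|)$, so a triangle in $\Gamma(H)$ on vertices that correspond to distinct non-central $G$-classes lifts to a triangle in $\Gamma_p(G)$. The two phenomena that make $\Gamma_p(G)$ differ from $\Gamma(H)$ are $G$-fusion of $H$-classes (which removes vertices and can destroy triangles) and the $p$-part of $|x^G|$ (which can add new edges through the common prime $p$), and both must be controlled.

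My first main step is to prove $|\pi(H)|\leq 2$. If three primes $q,r,s$ divided $|H|$, I would produce non-central $p$-regular elements in the Sylow $q$-, $r$- and $s$-subgroups of $H$ whose $G$-class lengths are pairwise non-coprime (by an Itô-type argument on centralizers inside $H$), giving a triangle. The case $|\pi(H)|=1$ makes $H$ a $q$-group (case i)), and triangle-freeness forces $\Gamma_p(G)$ to have at most two non-central $p$-regular vertices, leaving only shapes (d) or (e). When $|\pi(H)|=2$, write $\pi(H)=\{q,r\}$: after accounting for admissible $G$-fusion of $H$-classes, the class-length structure is forced, by an adaptation of the Bertram--Herzog--Mann/Kazarin non-connected characterisation \cite{BHM,K} to the triangle-free setting, into the quasi-Frobenius pattern with abelian kernel and complements, yielding case ii). The equality $\ze{H}=H\cap\ze{G}$ and the bound $|\ze{H}|\leq 2$ follow from the fixed-point-free action of the complement on the kernel of $H/\ze{H}$ together with a direct comparison of centralisers. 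Solubility of $G$ is then immediate because $|G|$ has at most three distinct prime divisors.

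The main obstacle will be isolating the sporadic exception of case iii). The group $(C_5\times C_5)\rtimes Q_8$ arises from the faithful $2$-dimensional irreducible representation of $Q_8$ over $\mathbb{F}_5$ (available because $-1$ is a square modulo $5$), and its ordinary common divisor graph $\Gamma(H)$ already contains a triangle---formed by the three $H$-classes of size $8$ consisting of non-trivial elements of the kernel $C_5\times C_5$. Triangle-freeness of $\Gamma_p(G)$ therefore demands extensive $G$-fusion of these $H$-classes, driven by the outer $S_3$-symmetry of $Q_8$ permuting $i,j,k$, so that the three size-$8$ classes collapse into a single $G$-class and the resulting graph reduces precisely to the path (f). Ruling out all other candidates in the two-prime quasi-Frobenius range, and certifying that no other combination of primes admits an analogous fusion mechanism, will require a delicate finite case analysis of the Sylow structure of $H$ and the admissible outer actions of $G$ on $H$. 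Once the structure of $H$ is pinned down in each of the cases i)--iii), the six admissible shapes (a)--(f) are recovered by directly enumerating the possible configurations of class lengths.
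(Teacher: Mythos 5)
Your proposal rests on a correspondence that fails: for a \emph{fixed} $p$-complement $H$ there is in general no divisibility relation between $|x^H|$ and the $p'$-part of $|x^G|$. The identity $|x^H|=|x^G|_{p'}$ requires $H\cap\ce{G}{x}\in\hall{p'}{\ce{G}{x}}$, and while such an $H$ exists for each individual $x$, no single $H$ can be adapted to all of its elements simultaneously; the paper's own introduction records the counterexample of the affine semilinear group over the field of eight elements with $p=7$, where an involution has class length $3$ in $H$ but class length $7$ in $G$, and where $\Gamma(H)$ is non-connected while $\Gamma_p(G)$ is connected. Consequently your key lifting claim --- that a triangle in $\Gamma(H)$ on vertices corresponding to distinct non-central $G$-classes produces a triangle in $\Gamma_p(G)$ --- is unfounded, and with it the first main step ($|\pi(H)|\leq 2$ via an ``It\^o-type argument'' inside $H$), the reduction of case ii) to a Bertram--Herzog--Mann/Kazarin-style analysis of $\Gamma(H)$, and the fusion mechanism you invoke for case iii). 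Note also that even granting a correct passage to $G$-classes, exhibiting three elements of pairwise \emph{non-coprime} class length from three primes in $\pi(H)$ is exactly the kind of statement that needs the Beltr\'an--Felipe theory of $\Gamma_p$ (at most two connected components, completeness of the components, the abelian normal subgroup $S$ of Proposition \ref{prop}), none of which your sketch supplies.

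The paper's route is genuinely different and its external inputs cannot be dispensed with as easily as you suggest. Solubility of $G$ is obtained first (Proposition \ref{soluble-prop}, which uses the classification of finite simple groups through \cite[Proposition 5]{BHM} applied to $p'$-chief factors), not from a count of prime divisors; your count only becomes available after $|\pi(H)|\leq 2$ is known, which in several branches itself uses solubility (Higman's theorem). The non-connected case is settled by Corollary \ref{cor_disconnected_2}, built on Theorem \ref{disconnected-p-reg}. In the connected case the decisive step is to bound the number of vertices by three: if some element of $H/(H\cap\ze{G})$ has order divisible by two primes, the path $x_q^G$ --- $x^G$ --- $x_r^G$ together with $G=\ce{G}{x_q}\ce{G}{x_r}$ forces $|x^G|=|x_q^G|\cdot|x_r^G|$ and any fourth vertex creates a triangle; otherwise Higman's theorem and Proposition \ref{CP2} apply and the two possible four-vertex trees are excluded, one of them by counting $q$- and $r$-elements via Frobenius' theorem. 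Finally, case iii) is not isolated by analysing outer actions on $Q_8$: it emerges from Tiedt's classification of soluble groups with exactly four $p$-regular classes (and Ninomiya's for three), applied after showing $H\cap\ze{G}=1$ in Proposition \ref{three-connected}. These classification results are the actual engine behind cases i)--iii) and the six graphs, and nothing in your sketch replaces them.
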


It is worth mentioning that none of the graphs (d), (e) and (f) of Theorem \ref{teoA} can occur as graphs $\Gamma(G)$ for a group $G$ (\emph{cf.} \cite{FZ}). 

The details concerning the structure of the $p$-complements of $G$ in each case are described in Sections \ref{sec-non-connected} and \ref{sec-connected}. In particular, in Corollary \ref{cor_disconnected_2} we make further progress on the aforementioned open problem concerning the $p$-structure of $G$ when $\Gamma_p(G)$ is non-connected.

It will be sometimes possible to deduce that $G$ has either two, three or four $p$-regular classes, and these groups were characterised in \cite{N1, N2, T} when $\rad{p}{G}=1$. Since the number of $p$-regular classes of $G$ and $G/\rad{p}{G}$ is the same, then we will occasionally be able to obtain the structure of $G/\rad{p}{G}$. However, we remark that central $p$-regular classes are also studied in those papers, whilst in our analysis the number of those classes is a priori unknown. Besides, some of the groups that we consider may have five or more $p$-regular classes (see Remark \ref{remark_many_classes}), and such groups have not been characterised yet.

Finally, we illustrate with examples throughout the paper the extent to which these results are best possible.


\section{Preliminaries}

In the sequel, if $n$ is a positive integer, then we write $\pi(n)$ for the set of prime divisors of $n$. In addition, $\pi(G)$ is the set of prime divisors of $\abs{G}$ and, if $B=b^G$ is a conjugacy class of $G$, then $\pi(B)$ is the set of prime divisors of $|B|=|G:\ce{G}{b}|$. As usual, given a prime $p$, the set of all Sylow $p$-subgroups of $G$ is denoted by $\syl{p}{G}$, and $\hall{\pi}{G}$ is the set of all Hall $\pi$-subgroups of $G$ for a set of primes $\pi$. We write $\pi'$ for the set of primes that do not belong to $\pi$. The order of an element $g\in G$ is $o(g)$, and for any subset $\sigma\subseteq \pi(o(g))$, the $\sigma$-part of $g$ is denoted by $g_{\sigma}$. The largest $\pi$-number that divides the positive integer $n$ is $n_{\pi}$. The number of $p$-regular conjugacy classes of $G$ is denoted by $k_{p'}(G)$. Finally, cyclic, dihedral, semidihedral and elementary abelian groups of order $n$ are denoted by $C_n$, $D_n$, $SD_n$ and $E_n$, respectively; symmetric and alternating groups of degree $n$ are written $\Sigma_n$ and $A_n$, respectively, and $Q_8$ is the quaternion group of order $8$. The remaining notation and terminology used is standard in the frameworks of group theory and graph theory.

The following well-known lemma is used frequently and without further reference.

\begin{lemma}
Let $N$ be a normal subgroup of a group $G$. Then:
\vspace*{-2mm}
\begin{itemize}
\setlength{\itemsep}{-1mm}
\item[\emph{(a)}] $|x^N|$ divides $|x^G|$, for any $x\in N$.
\item[\emph{(b)}] $|(xN)^{G/N}|$ divides $|x^G|$, for any $x\in G$.
\item[\emph{(c)}] If $xN\in G/N$ is a $p$-regular element, then $xN=yN$ for some $p$-regular element $y\in G$.
\item[\emph{(d)}] If $x,y\in G$ have coprime orders and they commute, then $\ce{G}{xy}=\ce{G}{x}\cap\ce{G}{y}$. In particular, both $|x^G|$ and $|y^G|$ divide $|(xy)^G|$.
\end{itemize}
\end{lemma}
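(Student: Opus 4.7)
My plan is to prove the four parts independently, each by a short manipulation of centralizers and subgroup indices together with the canonical decomposition of an element into commuting $p$-part and $p'$-part.

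Parts (a) and (b) are immediate from the isomorphism theorems. For (a) I would note that $\ce{N}{x}=N\cap \ce{G}{x}$, and then $N/(N\cap \ce{G}{x})\cong N\ce{G}{x}/\ce{G}{x}$ gives $|x^N|=[N\ce{G}{x}:\ce{G}{x}]$, which divides $[G:\ce{G}{x}]=|x^G|$. For (b) I would observe that cosets of commuting elements commute, hence $\ce{G}{x}N/N\subseteq \ce{G/N}{xN}$; thus $|(xN)^{G/N}|$ divides $[G/N:\ce{G}{x}N/N]=[G:\ce{G}{x}N]$, which in turn divides $|x^G|$.

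For (c) I would write $x=x_{p}x_{p'}$, the standard commuting decomposition of $x$ into its $p$-part and $p'$-part, so that $xN=(x_{p}N)(x_{p'}N)$ is the analogous decomposition in $G/N$. If $xN$ is $p$-regular its $p$-part is trivial, forcing $x_{p}\in N$; hence $xN=x_{p'}N$ and one may take $y=x_{p'}$. For (d) the crux is the identity $\langle xy\rangle=\langle x,y\rangle$: since $x,y$ commute with coprime orders, $(xy)^{o(x)}=y^{o(x)}$ generates $\langle y\rangle$ (because $\gcd(o(x),o(y))=1$) and symmetrically $(xy)^{o(y)}$ generates $\langle x\rangle$, so both $x$ and $y$ lie in $\langle xy\rangle$. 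Consequently $\ce{G}{xy}=\ce{G}{x}\cap \ce{G}{y}$, and the inclusions $\ce{G}{x}\cap\ce{G}{y}\le \ce{G}{x}\le G$ (and the analogue for $y$) force $|x^G|$ and $|y^G|$ to divide $|(xy)^G|$ by elementary index arithmetic.

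Main obstacle: nothing here is genuinely difficult. The only non-bookkeeping step is the identification $\langle xy\rangle=\langle x,y\rangle$ in (d), which makes essential use of the coprimality of $o(x)$ and $o(y)$ and is precisely what turns (d) into a usable tool rather than a triviality. A secondary point requiring a line of justification is in (c), namely that the $p$-part of $xN$ in $G/N$ really is the image of $x_p$; this is automatic from the commuting factorisation, but should be made explicit.
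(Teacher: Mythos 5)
Your proof is correct in all four parts; the paper states this lemma as well known and gives no proof at all, so there is no argument of the authors' to compare against. The routes you take are the standard ones — the isomorphism $N/(N\cap \ce{G}{x})\cong N\ce{G}{x}/\ce{G}{x}$ for (a), the inclusion $\ce{G}{x}N/N\leqslant \ce{G/N}{xN}$ for (b), the commuting $p$-part/$p'$-part decomposition for (c), and the identity $\langle xy\rangle=\langle x,y\rangle$ for (d) — and each step you flag as needing justification (uniqueness of the decomposition passing to $G/N$, coprimality forcing $y^{o(x)}$ to generate $\langle y\rangle$) is handled correctly.
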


In certain situations of our study, we will be able to provide not only the structure of the $p$-complements of $G$, but also the structure of $G/\rad{p}{G}$, due to the fact below.

\begin{lemma}
\label{kpG}
Let $G$ be a group, and let $p$ be a prime. Then $k_{p'}(G)=k_{p'}(G/\rad{p}{G})$.
\end{lemma}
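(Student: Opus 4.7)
The plan is to establish a bijection between the $p$-regular conjugacy classes of $G$ and those of $G/N$, where $N:=\rad{p}{G}$. The map sends $x^G$ (with $x$ $p$-regular in $G$) to $(xN)^{G/N}$; it is well-defined on classes because conjugation in $G$ descends to the quotient, and its image lies in the $p$-regular classes of $G/N$ since $o(xN)$ divides $o(x)$ and is therefore coprime to $p$.

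Surjectivity is immediate from part (c) of the preliminary lemma: every $p$-regular element of $G/N$ has a $p$-regular representative in $G$.

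The substantive step is injectivity, which amounts to proving that any two $p$-regular elements lying in the same coset of $N$ are conjugate in $G$. Given such elements $y$ and $x\in yN$, I would work inside $H:=N\langle y\rangle$. Since $N\triangleleft H$ is a $p$-group and $\langle y\rangle$ is a cyclic $p'$-group, we have $H=N\rtimes\langle y\rangle$ with $\langle y\rangle\in\hall{p'}{H}$. The cyclic subgroup $\langle x\rangle$ is a $p'$-subgroup of $H$, so by Schur--Zassenhaus it is contained in some Hall $p'$-subgroup of $H$, and any such Hall subgroup is $H$-conjugate to $\langle y\rangle$. Hence some $h\in H$ sends $x$ into $\langle y\rangle$. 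Because $H/N$ is abelian we get $x^hN=xN=yN$, and from $\langle y\rangle\cap N=1$ it follows that the unique element of $\langle y\rangle$ in the coset $yN$ is $y$ itself. Therefore $x^h=y$.

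The main obstacle is precisely this last step, namely pinning down conjugacy of $p$-regular elements inside a single $N$-coset; everything else is direct bookkeeping using the preliminary lemma. Once the bijection is in place, $k_{p'}(G)=k_{p'}(G/\rad{p}{G})$ follows at once.
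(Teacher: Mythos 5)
Your proof is correct and follows essentially the same route as the paper: the paper likewise reduces to showing that two $p$-regular elements lying in the same coset of $\rad{p}{G}$ are conjugate, conjugates one of them into the cyclic group generated by the other via conjugacy of complements (Schur--Zassenhaus), and concludes from $\langle b\rangle\cap\rad{p}{G}=1$. The only cosmetic differences are that the paper argues by contradiction on the inequality $k_{p'}(G)\leq k_{p'}(G/\rad{p}{G})$ rather than setting up an explicit bijection, and replaces your appeal to the abelianness of $H/N$ by a short commutator computation.
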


\begin{proof}
Certainly $k_{p'}(G/\rad{p}{G})\leq k_{p'}(G)$, so it is enough to prove the other inequality. By contradiction, let us suppose that $a^G\neq b^G$ are two $p$-regular classes of $G$ such that $(a\,\rad{p}{G})^{G/\rad{p}{G}}=(b\,\rad{p}{G})^{G/\rad{p}{G}}$. Then for some $g\in G$ it holds $a^g\in b\,\rad{p}{G}\subseteq \langle b\rangle \rad{p}{G}$. Thus $a^g=zb$ for some $z\in\rad{p}{G}$, and also there exists certain $x\in \rad{p}{G}$ such that $(a^g)^x=a^{gx}\in\langle b\rangle$. It follows $b(a^{gx})^{-1}z^x=b(b^{-1})^x\in [\langle b\rangle, \rad{p}{G}]$, and since $z^x\in\rad{p}{G}$, we deduce $b(a^{gx})^{-1}\in\rad{p}{G}$. Consequently $b(a^{gx})^{-1}\in \rad{p}{G}\cap \langle b\rangle=1$ and $b=a^{gx}$, a contradiction.
\end{proof}

The situation when $\Gamma_p(G)$, the common divisor graph for $p$-regular conjugacy classes, is non-connected was mainly studied in \cite{BF3}.

\begin{theorem}
\label{disconnected-p-reg}
Let $G$ be a $p$-separable group, for a given prime $p$, and let $H$ be a $p$-complement of $G$. Suppose that $\Gamma_p(G)$ is non-connected. Let $B_0$ be a non-central $p$-regular class of maximal cardinality, and let $\pi_0$ be the set of prime divisors of the lengths of all conjugacy classes lying in the same connected component as $B_0$.
\begin{itemize}
\setlength{\itemsep}{-1mm}
\item[\emph{(a)}] If $p\notin \pi_0$, then $G$ is $p$-nilpotent, $H$ is quasi-Frobenius with abelian kernel and complements, and $\ze{H}=H\cap\ze{G}$. Moreover, each complement of $H$ is centralised by some Sylow $p$-subgroup of $G$.
\item[\emph{(b)}] If $p\in\pi_0$, and either $|\pi_0|\geq 3$ or $G$ has abelian Hall $\pi_0\smallsetminus\{p\}$-subgroups, then $\ze{H}=H\cap\ze{G}$, and $H$ is a quasi-Frobenius group with abelian kernel and complements.
\end{itemize}
\end{theorem}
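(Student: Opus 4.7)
The statement is a consolidation of results from \cite{BF3}, and the natural plan is to reconstruct them case by case. Write $B_0=b^G$ and let $\Delta_0$ denote the connected component of $\Gamma_p(G)$ containing $B_0$. By definition of $\pi_0$, every non-central $p$-regular class lying in $\Delta_0$ has length whose prime divisors belong to $\pi_0$, whilst classes in the complementary component(s) have length whose prime divisors lie entirely outside $\pi_0$.

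For part (a), the hypothesis $p\notin\pi_0$ forces $|B_0|$ to be a $p'$-number, so some $P\in\syl{p}{G}$ is contained in $\ce{G}{b}$. The first milestone is to prove $p$-nilpotence: using coprime action together with $p$-separability, and the fact that \emph{every} class in $\Delta_0$ has $p'$-length, one should be able to conjugate the representatives so that the same $P$ centralises all of them, and argue that such representatives generate a normal $p$-complement $H$ of $G$. Once $G=H\rtimes P$ is in hand, the $p$-regular classes of $G$ correspond to $P$-orbits on $H$-classes and a direct computation shows that, for any $p$-regular $h\in H$, the primes different from $p$ dividing $|h^G|$ coincide with those dividing $|h^H|$. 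Hence $\Gamma(H)$ inherits the disconnection of $\Gamma_p(G)$, and the classical Bertram--Herzog--Mann theorem delivers the quasi-Frobenius structure of $H$ with abelian kernel and complements, with $\ze{H}=H\cap\ze{G}$ arising as a byproduct. The final clause is then a restatement of the first observation above for complement representatives, whose classes live in $\Delta_0$ and therefore have $p'$-length.

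Part (b) is the more delicate of the two, since now $p\in\pi_0$ allows $|B_0|$ to be divisible by $p$ and one can no longer expect $p$-nilpotence. The goal is restricted to the structure of $H$. I would split the analysis according to whether a given $p$-regular element lies in $\Delta_0$ or in the other component, and exploit coprimality of class lengths across components to produce commuting relations that transfer to $H$. The extra hypothesis (either $|\pi_0|\geq 3$ or the Hall $\pi_0\smallsetminus\{p\}$-subgroup of $G$ is abelian) should be read as an arithmetic safeguard: it guarantees that enough non-$p$ primes remain available to tell classes apart when passing from $G$- to $H$-fusion, and in particular rules out the pathological case in which $\ze{H}$ would strictly contain $H\cap\ze{G}$. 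Once this centre identity is in place, the disconnection of $\Gamma_p(G)$ can be pushed down to an analogous disconnection of $\Gamma(H)$, and Bertram--Herzog--Mann again yields the quasi-Frobenius conclusion.

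The principal obstacle in both parts is that $G$-conjugacy does not descend to $H$-conjugacy, and the lengths $|h^G|$ and $|h^H|$ are not comparable in general for a $p$-regular $h$; this is precisely the phenomenon illustrated by the affine semilinear example in the introduction, and it is the reason why an auxiliary hypothesis is needed in part (b). The heart of the proof is therefore a careful bookkeeping that tracks which primes in the class-length partition survive the passage from $G$ to $H$, and which classes get merged or split in that process.
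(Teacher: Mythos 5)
First, note that the paper does not prove this theorem at all: it simply cites \cite[Theorem 5 (a)]{BF1} and \cite[Theorems 8 and 12, Corollary 10]{BF3}, so your attempt to reconstruct the arguments is necessarily a different route. As a reconstruction, however, it has genuine gaps at precisely the points where the cited papers do their real work. In part (a), the step ``one should be able to conjugate the representatives so that the same $P$ centralises all of them, and argue that such representatives generate a normal $p$-complement'' is not an argument: each class in $\Delta_0$ has $p'$-length, so each representative can indeed be conjugated into $\ce{G}{P}$ for a fixed $P\in\syl{p}{G}$, but the subgroup these elements generate need not be normal, need not be a $p'$-group (products of $p$-regular elements are not $p$-regular in general), and need not contain the elements of the other component. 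Establishing $p$-nilpotence here is the substance of \cite[Theorem 5 (a)]{BF1} and requires the normal abelian $p'$-subgroup $S$ of Proposition \ref{prop} and an analysis of $\ce{G}{S}$. Your descent to $\Gamma(H)$ also has a hole: the identity $|h^G|_{p'}=|h^H|$ for $h\in H$ is correct once $G=H\rtimes P$, but a non-central vertex of $\Gamma_p(G)$ whose length is a $p$-power becomes a \emph{central} class of $H$, so the disconnection of $\Gamma_p(G)$ does not automatically pass to $\Gamma(H)$ without first excluding such vertices; and neither $\ze{H}=H\cap\ze{G}$ nor the claim that a whole complement (rather than each of its elements separately) is centralised by a single Sylow $p$-subgroup is a ``byproduct'' of the Bertram--Herzog--Mann/Kazarin theorem, which is a statement about $H$ alone and says nothing about how $H$ sits in $G$.

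Part (b) is where the real difficulty lies, and your proposal contains no argument for it. Since $G$ need not be $p$-nilpotent when $p\in\pi_0$, the relation $|h^G|_{p'}=|h^H|$ is unavailable, and --- as the introduction of this very paper stresses with the affine semilinear example --- there is in general no divisibility relation between $|h^H|$ and $|h^G|$; so ``the disconnection of $\Gamma_p(G)$ can be pushed down to an analogous disconnection of $\Gamma(H)$'' is exactly the assertion that needs proof, not a consequence of bookkeeping. Your reading of the hypothesis ``$|\pi_0|\geq 3$ or $G$ has abelian Hall $\pi_0\smallsetminus\{p\}$-subgroups'' as an ``arithmetic safeguard'' is a guess rather than a mechanism: its actual role in \cite{BF3} is to control the Hall $\pi_0\smallsetminus\{p\}$-structure of $G$ (indeed, by Remark \ref{remark_intro} one always has $|\pi_0|\geq 2$ when $p\in\pi_0$, and the case $\pi_0=\{p,q\}$ with non-abelian Sylow $q$-subgroups remains open --- which would not be the case if the hypothesis were merely a bookkeeping device). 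To make this a proof you would need, at minimum, a correct derivation of $p$-nilpotence in (a), a verification that both components of $\Gamma_p(G)$ contribute non-central $H$-classes, a separate argument for $\ze{H}\leq\ze{G}$, and an actual proof (not a hope) of the transfer of disconnectedness in (b) under the stated hypothesis.
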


\begin{proof}
Statement (a) follows from \cite[Theorem 5 (a)]{BF1} and \cite[Theorem 8]{BF3}, and statement (b) can be obtained from \cite[Corollary 10 (b), Theorem 12 (b)]{BF3}.
\end{proof}

\medskip

\begin{remark}
\label{remark_intro}
Following the notation of the previous theorem, we point out that by \cite[Theorem 4 (b)]{BF3}, if $p\in\pi_0$, then $|\pi_0|\geq 2$. Hence, as mentioned in the Introduction, it remains unsolved in the above theorem the situation where $\pi_0=\{p,q\}$ and a Sylow $q$-subgroup of $G$ is non-abelian.
\end{remark}

Next we collect some features of non-central $p$-regular classes of maximal length.

\begin{proposition}
\label{prop}
Let $p$ be a prime, and suppose that $G$ is a $p$-separable group. Let $B_0$ be a non-central $p$-regular class of maximal length. Set $$ S=\langle D \; : \; D \text{ is $p$-regular, non-central, and} \; (|B_0|,|D|)=1\rangle. $$ Then $S$ is an abelian normal $p'$-subgroup of $G$, and if $\ze{G}_{p'}$ denotes the $p$-complement of $\ze{G}$, then $\ze{G}_{p'}\leqslant S$ and $\pi(S/\ze{G}_{p'}) \subseteq \pi(B_0)$.
\end{proposition}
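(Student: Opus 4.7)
The plan is to establish, in order, that $S$ is normal in $G$, that $\ze{G}_{p'}\leqslant S$, that $S$ is abelian, and that $\pi(S/\ze{G}_{p'})\subseteq \pi(B_0)$. Abelianness combined with the fact that the generating set consists of $p$-regular elements will automatically yield that $S$ is a $p'$-subgroup. Normality is immediate, since the generating set is a union of $G$-conjugacy classes. For the inclusion $\ze{G}_{p'}\leqslant S$, I would fix $x$ in some generating class $D$ and any $z\in\ze{G}_{p'}$: the element $zx$ has the same centralizer as $x$, so $(zx)^G$ is again a non-central $p$-regular class of length $|D|$ coprime to $|B_0|$, whence $zx\in S$ and $z=(zx)x^{-1}\in S$.

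The core observation for the remaining two assertions is that, for any $x$ in a generating class $D$ and any $y\in B_0$, the coprimality $\gcd(|D|,|B_0|)=1$ forces $G=\ce{G}{x}\ce{G}{y}$, so $D$ is a single $\ce{G}{y}$-orbit under conjugation and $B_0$ is a single $\ce{G}{x}$-orbit. From this I would deduce abelianness as follows: given two generators $x_1\in D_1$ and $x_2\in D_2$ with $[x_1,x_2]=1$ and coprime orders, part~(d) of the previous lemma yields $|D_1|\cdot|D_2|\mid |(x_1x_2)^G|$; the product $x_1x_2$ is $p$-regular, and if it were non-central then $|(x_1x_2)^G|\leqslant|B_0|$ by the maximality hypothesis, forcing $|D_1|\cdot|D_2|\leqslant|B_0|$, which combined with $\gcd(|D_1|\cdot|D_2|,|B_0|)=1$ gives the contradiction $|D_1|=|D_2|=1$. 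Hence $x_1x_2\in\ze{G}$, so $[x_1,x_2]=1$ in a trivial way. This handles the coprime-order case; the general case is reduced to it by decomposing each $x_i$ into its prime-power components and analysing them separately, supplemented by a commutator analysis leveraging the transitive $\ce{G}{y}$-action on $D$ (together with $y$ being central in $\ce{G}{y}$, which forces all $\langle y\rangle$-orbits on $D$ to have a common size).

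For the inclusion $\pi(S/\ze{G}_{p'})\subseteq\pi(B_0)$, I would fix a prime $q\notin\pi(B_0)\cup\{p\}$; since $q\nmid|B_0|=|G:\ce{G}{y}|$, every Sylow $q$-subgroup of $\ce{G}{y}$ is a Sylow $q$-subgroup of $G$. A Sylow $q$-subgroup of $S$ should then be forced into $\ze{G}_{p'}$ by combining the identity $G=\ce{G}{x}\ce{G}{y}$ for every generating class $D\ni x$, the fact that any $q$-element of $S$ is (modulo the abelian structure already established) a product of $q$-parts of generators, and a centralization argument forcing such $q$-elements to commute with a full Sylow $q$-subgroup of $G$ and with representatives of $B_0$, whence with $\langle B_0\rangle$. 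The main obstacle I expect lies in completing the abelianness step: while the coprime-order case falls out cleanly from the maximality of $|B_0|$ via Lemma 2.1(d), extending it to arbitrary pairs of generators requires a careful reduction via prime-power decompositions and commutator manipulations that exploit both the maximality of $|B_0|$ and the $p$-separability of $G$ (which supplies the Hall subgroups needed to isolate individual prime-power components of the generators).
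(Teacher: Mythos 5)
First, note that the paper does not actually prove this proposition: it is imported verbatim from \cite[Proposition 1]{BF1} and \cite[Theorem 4]{BF3}, so any self-contained argument is welcome. The peripheral parts of your sketch are fine: normality is immediate, the computation $z=(zx)x^{-1}\in S$ for $\ze{G}_{p'}\leqslant S$ is correct (it needs at least one generating class $D$ to exist), and an abelian group generated by $p$-regular elements is indeed a $p'$-group, so abelianness carries the whole weight.

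It is precisely the abelianness step that fails, in three distinct places. (1) Lemma 2.1(d) gives that $|D_1|$ and $|D_2|$ each divide $|(x_1x_2)^G|$, hence that $\operatorname{lcm}(|D_1|,|D_2|)$ does; to get $|D_1|\cdot|D_2|$ dividing it you would need $(|D_1|,|D_2|)=1$, which is not available --- both lengths are coprime to $|B_0|$ but may share prime factors with each other. (2) Even granting $|D_1|\cdot|D_2|\leqslant|B_0|$, there is no contradiction with $\bigl(|D_1|\cdot|D_2|,|B_0|\bigr)=1$: an integer can be at most $|B_0|$ and coprime to $|B_0|$ without being $1$. Indeed the conclusion you aim for is false: in $G=C_{15}\rtimes C_2$ with the involution inverting (and $p=7$, say), $B_0$ is the class of involutions of length $15$, and commuting elements $x_1,x_2$ of orders $3$ and $5$ in $C_{15}$ have $|x_1^G|=|x_2^G|=2$ while $x_1x_2$ is non-central with $|(x_1x_2)^G|=2$; its class is simply another generating class of $S$, so maximality of $|B_0|$ yields nothing here. (3) Most seriously, the case you treat assumes $[x_1,x_2]=1$ at the outset, which is exactly what abelianness requires you to establish, and the promised reduction of the general case ``by decomposing into prime-power components and commutator manipulations'' is not an argument --- the genuinely hard point is to produce commutation between elements of two generating classes in the first place (via the factorisation $G=\ce{G}{x}\ce{G}{y}$ and the maximality of $|B_0|$), and that idea is absent. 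The final inclusion $\pi(S/\ze{G}_{p'})\subseteq\pi(B_0)$ is likewise only described as a list of ingredients that ``should'' force the conclusion. As it stands, the proposal proves normality, the containment $\ze{G}_{p'}\leqslant S$, and the $p'$-property conditional on abelianness, but not the two substantive assertions.
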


\begin{proof}
This is proved within \cite[Proposition 1]{BF1} when $\Gamma_p(G)$ is connected, and in \cite[Theorem 4]{BF3} when it is non-connected.
\end{proof}

\medskip

The following technical lemma will be very useful along the paper.

\begin{lemma}
\label{lemma_centre}
Let $G$ be a $p$-separable group for a fixed prime $p$, and let $H$ be a $p$-complement of $G$. If the vertex set of $\Gamma_p(G)$ is non-empty, there is no triangle in $\Gamma_p(G)$, and $H$ is not of prime power order, then $|H\cap\ze{G}|\leq 2$.
\end{lemma}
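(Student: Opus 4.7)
The plan is to argue by contradiction. Assume $|Z|\geq 3$, where $Z:=H\cap\ze{G}$, and aim to produce a triangle in $\Gamma_p(G)$. Since the vertex set of $\Gamma_p(G)$ is non-empty, fix a non-central $p$-regular element $x\in H$; decomposing $x$ into its commuting prime-power parts, at least one of these parts is non-central in $G$, so one may assume $o(x)=q^n$ for some prime $q\neq p$.

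The main tool is twisting by central elements. For every $z\in Z$ one has $\ce{G}{xz}=\ce{G}{x}\cap\ce{G}{z}=\ce{G}{x}$, so $|(xz)^G|=|x^G|$. The set $K_x:=\{z\in Z: xz\in x^G\}$ is a subgroup of $Z$, and $(xz)^G=(xz')^G$ exactly when $z'z^{-1}\in K_x$; hence $\{(xz)^G:z\in Z\}$ yields precisely $[Z:K_x]$ distinct classes. Moreover, $x\in (xz)^G$ forces $o(x)=o(xz)$, which since $x$ is a $q$-element requires $z$ to be a $q$-element; thus $K_x\leq Z_q$, where $Z_q$ denotes the Sylow $q$-subgroup of $Z$. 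If $[Z:K_x]\geq 3$, then three distinct non-central $p$-regular classes of the common length $|x^G|\geq 2$ appear, any two of which are adjacent through any prime divisor of $|x^G|$; a triangle is obtained. So from now on one may assume $[Z:K_x]\leq 2$, and consequently $|Z_{q'}|=[Z:Z_q]\leq 2$.

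Since $H$ is not of prime-power order, some $r\in\pi(H)\setminus\{q\}$ exists. If there is a non-central $p$-regular element $y$ of prime-power order $r^m$, the same reasoning applied to $y$ gives either a triangle or $|Z_{r'}|\leq 2$. Combining with $Z_r\leq Z_{q'}$ and $Z_q\leq Z_{r'}$ one obtains $|Z_q|,|Z_r|\leq 2$; since $Z_q$ and $Z_r$ are $q$- and $r$-groups respectively with $q\neq r$, at most one of them can be $C_2$, and a short arithmetic check on the abelian group $Z=Z_q\times Z_{q'}$ then yields $|Z|\leq 2$, a contradiction. Otherwise every $r$-element of $H$ is central in $G$ for each $r\in\pi(H)\setminus\{q\}$; then each such Sylow $r$-subgroup of $H$ lies in $Z$, so the Hall $q'$-subgroup of $H$ coincides with $Z_{q'}$. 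Combined with $|Z_{q'}|\leq 2$ and the fact that $H$ is not a prime-power, this forces $\pi(H)=\{q,2\}$ with the Sylow $2$-subgroup of $H$ equal to a central $C_2=\langle c\rangle\leq\ze{G}$; in particular $q$ is odd and $|Z_q|\geq q\geq 3$.

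This last configuration is the main obstacle. Here $[Z:K_x]=2=[Z:Z_q]$ forces $K_x=Z_q$, so the $x$-family contributes only the two classes $x^G$ and $(xc)^G$, both of length $|x^G|$; a third adjacent class must come from elsewhere. The plan is to analyse the remaining $G$-conjugacy classes of non-central elements of the Sylow $q$-subgroup $Q$ of $H$: since $[Q:Z_q]\geq q\geq 3$, there are further cosets of $Z_q$ in $Q$ yielding additional non-central $p$-regular classes, and the central involution $c$ again duplicates each such class into two of equal length. A careful arithmetic analysis of the possible lengths, exploiting that they must be compatible with $[G:Q]$ and the structural constraints imposed by the action of the Sylow $p$-subgroup of $G$ on $Q$, will show that three pairwise coprime lengths each at least $|Z_q|\geq q$ cannot simultaneously occur; a triangle is therefore unavoidable, completing the contradiction.
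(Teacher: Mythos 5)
Your argument is sound and closely parallels the paper's up to the last configuration: the twisting map $z\mapsto xz$ by central $p'$-elements, the observation that all such products have centralizer $\ce{G}{x}$ and hence equal class length, and the conclusion that three distinct twists force a triangle, are exactly the paper's mechanism (your subgroup $K_x$ is a clean way to package it), and your Case A correctly disposes of the situation where two different primes admit non-central prime-power elements. The genuine gap is the final case, which you yourself label ``the main obstacle'': there $H=Q\times\langle c\rangle$ with $c$ a central involution, $Z_q=Q\cap\ze{G}$ of order at least $q\ge 3$, and you only state a \emph{plan} (``a careful arithmetic analysis \dots will show \dots''). Nothing is actually proved there, and the direction you point in --- comparing class lengths with $[G:Q]$ and the action of a Sylow $p$-subgroup --- is not the idea that closes the case.

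The missing idea is reality of classes. In that configuration every non-central $q$-element $x'$ satisfies $K_{x'}=Z_q$, so $q$ divides $|x'^G|$; if there were two distinct non-central classes of $q$-elements, they together with the $c$-twist of one of them (distinct by element orders) would give three classes with lengths all divisible by $q$, a triangle. Hence there is a \emph{unique} non-central class $x^G$ of $q$-elements, which is therefore real, since $x^{-1}$ is again a non-central $q$-element. Moreover $2\in\pi(H)$ forces $p$ odd, so the Sylow $2$-subgroup of $G$ is the central $\langle c\rangle\leqslant\ce{G}{x}$ and $|x^G|=|G:\ce{G}{x}|$ is odd. A real class of odd length has an involution as representative, contradicting $q$ odd. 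Without this (or some replacement for it), your proof does not establish the lemma; everything before that point is fine.
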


\begin{proof}
Let us suppose that $H\cap \ze{G}>1$. Since $H$ has not prime power order, and there are vertices in $\Gamma_p(G)$, then we can then take a $q$-element $x\in H\smallsetminus (H\cap\ze{G})$ and a $r$-element $z\in H\cap \ze{G}$ for two different primes $q\neq r$. If $r>2$, then it is easy to see by the orders of the elements that $x^G$, $(zx)^G$ and $(z^2x)^G$ are different $p$-regular classes that form a triangle in $\Gamma_p(G)$, which is not possible. Hence $r=2$. Now if $|H\cap\ze{G}|_2>2$, then there exist two different non-trivial $2$-elements $z_1,z_2\in H\cap \ze{G}$, and similarly as before $x^G$, $(z_1x)^G$ and $(z_2x)^G$ form a triangle in $\Gamma_p(G)$, a contradiction. Consequently, $|H\cap\ze{G}|_2=2$. 

Finally, we claim that $|H\cap\ze{G}|_q=1$. Arguing by contradiction, let us suppose that there exists $1\neq y\in H\cap\ze{G}$ a $q$-element. We first show that there is only one non-central class of $q$-elements, which is $x^G$. Let $z\in H\cap \ze{G}$ be the involution considered above. Observe that $(xy)^G=x^Gy$ is necessarily equal to $x^G$, since otherwise we get a triangle in $\Gamma_p(G)$ with $x^G$, $(xz)^G$ and $(xy)^G$, which is not possible. Hence $x^G=x^G\langle y \rangle$, which implies that $|\langle y\rangle|$ divides $|x^G|$. So $q$ divides $|x^G|$, for every $q$-element $x\in H\smallsetminus (H\cap \ze{G})$. Therefore, by the triangle-free assumption, and the fact that $|x^G|=|x^Gz|$, we deduce that $x^G$ is the unique non-central class of $q$-elements. Observe that $|x^G|$ is odd: on the one hand this is clear when $2\notin \pi(H/(H\cap \ze{G}))$, and on the other hand if there is a $2$-element in $H\smallsetminus (H\cap\ze{G})$, then using similar arguments one can show that $2$ must divide its class size in $G$, so $|x^G|$ cannot be divisible by $2$. Since $x^G$ is real, then $x$ is an involution, a contradiction.
\end{proof}

The theorem below is a relevant result due to Higman concerning soluble groups whose elements all have prime power order.

\begin{theorem}[\text{\cite[Theorem 1]{H}}]
\label{CP}
Let $G$ be a soluble group such that every element has prime power order. Let $t$ be the prime such that $G$ has a non-trivial normal $t$-subgroup $M$. Then $G=M$ or $G/M$ is either:
\vspace*{-2mm}
\begin{enumerate}
\setlength{\itemsep}{-1mm}
	\item[\emph{(a)}] a cyclic group whose order is a power of a prime other than $t$; or
	\item[\emph{(b)}] a generalized quaternion group, $t$ being odd; or
	\item[\emph{(c)}] a group of order $t^as^b$ with cyclic Sylow subgroups, $s$ being a prime of the form $kt^a+1$, $k\in\mathbb{N}$.
\end{enumerate}
\vspace*{-2mm}
Thus, $G$ has order divisible by at most two primes, and $G/M$ is metabelian.
\end{theorem}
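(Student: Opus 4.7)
The plan is to combine the fixed-point-free action principle with Burnside's classical structure theorem for Frobenius complements. Let $N\leqslant M$ be a minimal normal subgroup of $G$; by solubility it is elementary abelian, and after replacing $M$ by $\rad{r}{G}$, where $r$ is the exponent of $N$, we may take $r=t$. The critical observation is that any $s$-element $y\in G$ with $s\neq t$ satisfies $\ce{N}{y}=1$: otherwise a non-trivial fixed point $n$ would yield $ny$ of composite order divisible by both $t$ and $s$, contradicting the CP-hypothesis.

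Next, I would invoke the classical Burnside result asserting that any finite group acting fixed-point-freely on a non-trivial abelian group has all its Sylow subgroups cyclic or generalized quaternion. Applied to the image of $G$ in $\op{Aut}(N)\cong G/\ce{G}{N}$, together with Hall's theorem (available by solubility) to lift $s$-elements of $G/\ce{G}{N}$ to $s$-elements of $G$, this forces every Sylow $s$-subgroup of $G/\ce{G}{N}$ with $s\neq t$ to be cyclic or generalized quaternion. I would then induct on $|G|$ by passing to $G/M$: the CP-hypothesis transfers there via the same Hall lifting, $\rad{t}{G/M}=1$ by maximality of $M$, and the argument iterates with a fresh minimal normal subgroup for a new prime.

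The main obstacle is to exclude a hypothetical third prime and to sort the remaining configurations into (a)--(c). If $u\in\pi(G/M)$ were a third prime distinct from $s$ and $t$, then Schur--Zassenhaus would supply commuting Hall subgroups of coprime orders in an extension, and non-trivial elements in each would produce a product of composite order in a preimage, contradicting CP. The trichotomy then follows from the Sylow structure forced above: if only the prime $s$ survives in $G/M$, a cyclic Sylow yields case (a) and a generalized quaternion Sylow yields case (b), where $t$ must be odd because a generalized quaternion group contains an involution and $t=2$ would force $G$ to be a $2$-group, already absorbed in $G=M$. Case (c) arises when $G/M$ involves both $s$ and $t$: here both Sylow subgroups must be cyclic, since a generalized quaternion Sylow is incompatible with the simultaneous fixed-point-free action of cyclic $t$-elements, and the congruence $s\equiv 1\pmod{t^a}$ is read off from Sylow's theorem applied to the action of the cyclic Sylow $t$-subgroup of order $t^a$ on the cyclic Sylow $s$-subgroup. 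The metabelian conclusion is immediate from these shapes.
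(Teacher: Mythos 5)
The paper does not prove this theorem: it is quoted from Higman \cite{H}, so the only ``proof'' in the source is the citation. Judged on its own merits, your sketch identifies the correct classical skeleton (a minimal normal $t$-subgroup $N$, the observation that the prime-power-order hypothesis forces every $t'$-element to act fixed-point-freely on $N$, and the resulting Frobenius-complement restrictions on Sylow subgroups), but the two steps that carry the real weight are not proved, and the justifications you offer for them do not work.

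First, the exclusion of a third prime. Schur--Zassenhaus produces a complement to a normal Hall subgroup; it does not produce \emph{commuting} Hall subgroups of coprime orders, and no general principle does: $\Sigma_4$ is a soluble group in which every element has prime power order, yet its Sylow $2$- and Sylow $3$-subgroups do not centralise one another --- indeed your hypothesis itself forbids nontrivial coprime elements from commuting, which is exactly why such a pair cannot be conjured from general position and the contradiction must be derived structurally. The correct route is through a Hall $t'$-subgroup $K$: it acts fixed-point-freely on $N$, hence is a Frobenius complement, and if $|K|$ had two prime divisors $s$ and $u$ one uses $\fit{K}\neq 1$, $\ce{K}{\fit{K}}\leq\fit{K}$ and Cauchy to locate a non-cyclic subgroup of order divisible by $su$ inside $K$, contradicting the classical theorem that every subgroup of order $pq$ of a Frobenius complement is cyclic. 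Second, in case (c) your reason for the cyclicity of the Sylow $t$-subgroup of $G/M$ is vacuous: a nontrivial $t$-group never acts fixed-point-freely on the $t$-group $N$, so the fixed-point-free machinery you set up gives no information at all about $t$-elements. One must instead work in $\overline{G}=G/M$ with $\rad{t}{\overline{G}}=1$, use $\ce{\overline{G}}{\fit{\overline{G}}}\leq\fit{\overline{G}}$ together with the structure of $\op{Aut}$ of a cyclic or generalized quaternion $s$-group to embed the Sylow $t$-subgroup of order $t^a$ into $\op{Aut}(C_{s^b})$ acting faithfully on its socle (whence it is cyclic and $t^a\mid s-1$ --- an automorphism-group computation, not Sylow's theorem), and separately rule out a generalized quaternion Sylow $2$-subgroup of $\overline{G}$ when both primes survive. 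As it stands the proposal is a reasonable road map of Higman's argument, but not a proof.
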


It is immediate to see that $G$ is a Frobenius group in cases (a) and (b) above, while $G/M$ is a Frobenius group in case (c).

We close this preliminary section with the next arithmetical result.

\begin{proposition}
\label{clave}
Let $G$ be a $\pi$-separable group, for a set of primes $\pi$.
\vspace*{-2mm}
\begin{enumerate}
\setlength{\itemsep}{-1mm}
	\item[\emph{(a)}] Each $\pi$-element has class size a $\pi$-number if and only if $G=\rad{\pi}{G}\times\rad{\pi'}{G}$.
	
	\item[\emph{(b)}] Each $\pi$-element has class size a $\pi'$-number if and only if $G$ has abelian Hall $\pi$-subgroups.
\end{enumerate}
\end{proposition}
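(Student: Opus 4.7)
Both implications of (a) and (b) are well-known arithmetical facts, and I will sketch a unified route. The ``if'' direction of (a) is immediate: if $G=\rad{\pi}{G}\times\rad{\pi'}{G}$, then each $\pi$-element $x$ lies in $\rad{\pi}{G}$, which is centralised by $\rad{\pi'}{G}$, so $|x^G|$ divides $|\rad{\pi}{G}|$, a $\pi$-number. The ``if'' direction of (b) is equally immediate: if a Hall $\pi$-subgroup $P$ of $G$ is abelian, then every $\pi$-element lies in some conjugate $P^g$ which centralises it, so $|x^G|$ divides $|G:P^g|=|G|_{\pi'}$, a $\pi'$-number.

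For the converse of (a), fix a Hall $\pi'$-subgroup $Q$ of $G$, and observe that for any $\pi$-element $x$ the length $|x^Q|=|Q:\ce{Q}{x}|$ divides both $|Q|$ (a $\pi'$-number) and $|x^G|$ (a $\pi$-number by hypothesis); hence $|x^Q|=1$, i.e.\ $Q\leqslant\ce{G}{x}$. Thus $Q$ centralises every $\pi$-element of $G$. Working in $\bar G=G/\rad{\pi'}{G}$, which inherits the hypothesis via $\pi$-separable lifting of $\pi$-elements and satisfies $\rad{\pi'}{\bar G}=1$, the Hall--Higman-type inclusion $\ce{\bar G}{\rad{\pi}{\bar G}}\leqslant\rad{\pi}{\bar G}$ forces the image of $Q$ in $\bar G$ to be trivial, so $Q=\rad{\pi'}{G}\triangleleft G$. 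A symmetric application of the divisibility (now with the roles of a Hall $\pi$-subgroup $P$) shows that $Q$ also centralises $P$; then $P$ is normalised by both $P$ and $Q$, hence normal, and $G=P\times Q=\rad{\pi}{G}\times\rad{\pi'}{G}$.

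For the converse of (b), the key step is to show that every $\pi$-element of $G$ lies in the centre of some Hall $\pi$-subgroup. Indeed, if $x$ is a $\pi$-element with $|x^G|$ a $\pi'$-number, then $|P|$ divides $|\ce{G}{x}|$, so $\ce{G}{x}$ contains some Hall $\pi$-subgroup $R$ of $G$. Since $x$ is a $\pi$-element commuting with $R$, the group $\langle x,R\rangle$ is a $\pi$-subgroup containing the Hall $\pi$-subgroup $R$, forcing $x\in R$ and so $x\in\ze{R}$. By Hall's conjugacy theorem for $\pi$-separable groups, every element of the fixed Hall $\pi$-subgroup $P$ is therefore $G$-conjugate to an element of $\ze{P}$.

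The main obstacle is passing from this ``everyone fuses into $\ze{P}$'' condition to the conclusion $P=\ze{P}$. The natural route is a Burnside-style fusion argument: if $x,y\in\ze{P}$ are $G$-conjugate with $y=x^g$, then $P$ and $P^g$ are both Hall $\pi$-subgroups of the $\pi$-separable centraliser $\ce{G}{y}$, so Hall's conjugacy inside $\ce{G}{y}$ furnishes $h\in\ce{G}{y}$ with $P=P^{gh}$; then $gh\in\no{G}{P}$ already realises the conjugacy of $x$ and $y$. A comparison of the number of $G$-classes of $\pi$-elements with the number of $\no{G}{P}$-orbits on $\ze{P}$ then forces $\ze{P}=P$, and this fusion/counting step is the subtlest part of the argument.
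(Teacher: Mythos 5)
The paper's own ``proof'' is only a pointer to \cite{FMS}, so a self-contained argument is welcome; your two ``if'' directions and the converse of (a) are correct (the reduction to $Q\leqslant\ce{G}{x}$ for all $\pi$-elements $x$, followed by the Hall--Higman inclusion in $G/\rad{\pi'}{G}$, is sound). The problem is the converse of (b), where the final step is a genuine gap. You correctly establish that every $\pi$-element lies in the centre of some Hall $\pi$-subgroup, hence is $G$-conjugate into $\ze{P}$, and that fusion in $\ze{P}$ is controlled by $\no{G}{P}$. But the concluding claim --- that comparing the number of $G$-classes of $\pi$-elements with the number of $\no{G}{P}$-orbits on $\ze{P}$ ``forces $\ze{P}=P$'' --- cannot work as stated: your own two facts show that these two numbers are \emph{always equal} under the hypotheses you have derived (every class of $\pi$-elements meets $\ze{P}$, and meets it in a single $\no{G}{P}$-orbit), irrespective of whether $P$ is abelian. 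So the comparison carries no information, and no counting argument is actually supplied. Cruder counts (e.g.\ bounding the number of $\pi$-elements by $|G:\no{G}{\ze{P}}|\cdot|\ze{P}|$ and comparing with Frobenius' theorem) also fail to produce a contradiction when $\ze{P}<P$.

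The gap is repairable with tools you already have in play, bypassing fusion entirely. Pass to $\overline{G}=G/\rad{\pi'}{G}$; the hypothesis is inherited ($\pi$-elements of $\overline{G}$ lift to $\pi$-elements of $G$ via the $\pi$-part decomposition, and class sizes in quotients divide class sizes in $G$), and $P\cong \overline{P}$ since $P\cap\rad{\pi'}{G}=1$. Put $K=\rad{\pi}{\overline{G}}$. For each $x\in K$ the index $|\overline{G}:\ce{\overline{G}}{x}|$ is a $\pi'$-number, so $\ce{\overline{G}}{x}$ contains some Hall $\pi$-subgroup of $\overline{G}$, which in turn contains $K$; hence $K\leqslant\ce{\overline{G}}{x}$ for every $x\in K$, i.e.\ $K$ is abelian. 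Since $\rad{\pi'}{\overline{G}}=1$, Hall--Higman gives $\ce{\overline{G}}{K}\leqslant K$, so $\ce{\overline{G}}{K}=K$. Finally, your own observation shows every $\pi$-element $y$ of $\overline{G}$ lies in the centre of some Hall $\pi$-subgroup, which contains $K$; thus $y\in\ce{\overline{G}}{K}=K$, so $\overline{P}=K$ is abelian. I recommend replacing the fusion/counting paragraph with this reduction.
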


\begin{proof}
The first assertion can be easily deduced from \cite[Lemma 9]{FMS}, and the second one is a consequence of \cite[Proposition 1]{FMS} when the trivial factorisation is considered.
\end{proof}


\section{The non-connected case}
\label{sec-non-connected}

In this section we focus on $p$-separable groups $G$ such that $\Gamma_p(G)$ is non-connected and triangle-free. Note that $\Gamma_p(G)$ has two complete connected components by \cite[Theorems 1 and 3]{BF1}, so we have only three situations to discuss: $\Gamma_p(G)$ has either two vertices and no edges, three vertices and one edge, or four vertices and two edges. Let us first study the general structure of the $p$-complements of $G$ in these three situations.

\begin{proposition}
\label{cor_disconnected}
Let $G$ be a $p$-separable group for a fixed prime $p$, and let $H$ be a $p$-complement of $G$. If $\Gamma_p(G)$ is non-connected and without triangles, then $H$ is a quasi-Frobenius group with abelian kernel and complements, and $\ze{H}=H\cap\ze{G}$.
\end{proposition}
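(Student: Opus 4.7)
The first step is to reduce to the case left open in Theorem \ref{disconnected-p-reg}. By \cite[Theorems 1 and 3]{BF1} each of the two connected components of a non-connected $\Gamma_p(G)$ is a complete subgraph, so the triangle-free hypothesis forces every component to have at most two vertices; thus $\Gamma_p(G)$ has two, three, or four non-central $p$-regular classes, as anticipated at the start of this section. I would then invoke Theorem \ref{disconnected-p-reg}: cases (a) and (b) already yield exactly the quasi-Frobenius structure of $H$ with abelian kernel and complements and the equality $\ze{H}=H\cap\ze{G}$ that the proposition claims. So the only situation to address is the one flagged in Remark \ref{remark_intro}, namely $p\in\pi_0$, $\pi_0=\{p,q\}$ for some prime $q\neq p$, and a Sylow $q$-subgroup $Q$ of $G$ non-abelian.

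In this remaining case I would use how rigidly the triangle-free assumption controls the class configuration. By Proposition \ref{clave}(b), a non-abelian $Q$ produces a $q$-element $y\in G$ with $q\mid|y^G|$, and this class must lie in the component $\Delta_0$ of $B_0$ since $q\in\pi_0$. The opposite component $\Delta_1$ then consists of at most two classes whose lengths are coprime to $pq$, because $\pi_0\cap\pi_1=\emptyset$. Proposition \ref{prop} furnishes the subgroup $S$ generated by the classes of $\Delta_1$ as an abelian normal $p'$-subgroup of $G$ with $\pi(S/\ze{G}_{p'})\subseteq\{q\}$, hence in particular $S\leqslant H$. Since there are at most four vertices in total, only a handful of configurations for the pair $(|\Delta_0|,|\Delta_1|)$ need to be examined, and in each one I would try to identify $S\cdot\ze{G}_{p'}$ as a Frobenius kernel inside $H/(H\cap\ze{G})$ and the image of $Q$ as the complement, forcing the required quasi-Frobenius decomposition with both factors abelian.

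The main obstacle is translating class-length data in $G$ into internal structural data of $H$, since (as emphasised in the Introduction via the affine semilinear example) $|h^H|$ and $|h^G|$ need not be directly comparable. To handle this I would combine three tools: Lemma \ref{lemma_centre}, which bounds $|H\cap\ze{G}|$ by $2$ once $H$ is not of prime power order and thereby limits the number of central $p$-regular classes; part (d) of the opening preliminary lemma, which gives $\ce{G}{xy}=\ce{G}{x}\cap\ce{G}{y}$ for commuting elements of coprime order and hence lets me chain divisibilities among class lengths; and the abelian normality of $S$ in $G$, which pins down a concrete candidate kernel. The delicate point will be to rule out that $Q/(Q\cap\ze{G})$ remains non-abelian, which would be incompatible with a quasi-Frobenius complement; the scarcity of vertices together with the fact that any two non-central $q$-elements belonging to different components would produce a triangle via the Lemma in part (d) should close this out.
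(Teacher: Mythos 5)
Your opening reduction coincides with the paper's: both components are complete, so each has at most two vertices, and Theorem \ref{disconnected-p-reg} disposes of everything except the case $\pi_0=\{p,q\}$ with a non-abelian Sylow $q$-subgroup; the paper likewise notes that it then suffices to prove that a Sylow $q$-subgroup of $G$ is abelian, so that Theorem \ref{disconnected-p-reg}(b) applies. The problem is what you propose to do in that remaining case. Your structural target is incoherent: you want $S\,\ze{G}_{p'}$ to be the Frobenius kernel of $H/(H\cap\ze{G})$ and ``the image of $Q$'' to be the complement, but Proposition \ref{prop} makes $S\,\ze{G}_{p'}/\ze{G}_{p'}$ a $q$-group while $Q$ is a Sylow $q$-subgroup, so kernel and complement would both be $q$-groups, whereas in a Frobenius group they have coprime orders (indeed $S$ sits inside a Sylow $q$-subgroup modulo the centre). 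No case analysis on $(|\Delta_0|,|\Delta_1|)$ can rescue this. Your closing appeal to part (d) of the preliminary lemma for two non-central $q$-elements in different components also fails, since that lemma requires commuting elements of \emph{coprime} orders.

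More importantly, the argument that actually carries the proof is absent. The correct aim is not to accommodate a non-abelian $Q$ but to show it cannot occur. The paper examines the representatives $a,b$ of the (at most two) classes in the component of $B_0$: reducing to prime power orders via divisibility of class lengths, it shows that if both were $q$-elements then the coprime factorisations $G=\ce{G}{a}\ce{G}{c}=\ce{G}{a}\ce{G}{d}$ would force $a,b\in\ce{G}{S}$, hence $S\leqslant\ze{H}$ and the collapse $|c^G|=|d^G|=1$; and that if they were $r$- and $s$-elements for distinct primes, a commuting product $a^gb$ would yield a fifth class $(a^gb)^G$ distinct from all four vertices. Hence $a$ and $b$ are $r$-elements for a single prime $r\neq q$, so every non-central $q$-element lies in $c^G\cup d^G\subseteq S$, giving $Q=S(Q\cap\ze{G})=S$ abelian (using $Q\cap\ze{G}\leqslant\ze{G}_{p'}\leqslant S$), which contradicts the case hypothesis and completes the reduction to Theorem \ref{disconnected-p-reg}(b). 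Some argument of this kind is indispensable, and your proposal does not supply it.
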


\begin{proof}
Let us first suppose that a $p$-regular class of maximal length is not an isolated vertex of $\Gamma_p(G)$, so it lies in a connected component of this graph, say $\{a^G,b^G\}$ with $a^G\neq b^G$. Hence in the other component of $\Gamma_p(G)$ there are at most two $p$-regular classes, say $\{c^G, d^G\}$ (possibly $c^G=d^G$). Set $\pi_a=\pi(a^G)$, $\pi_b=\pi(b^G)$, $\pi_c=\pi(c^G)$ and $\pi_d=\pi(d^G)$. We deduce by Theorem \ref{disconnected-p-reg} that $\pi_a=\pi_b=\{p,q\}$, and it is enough to prove that a Sylow $q$-subgroup of $G$ is abelian by statement (b) of that result. Note that $S=\langle c^G, d^G\rangle$ is an abelian normal $q$-subgroup of $G$ by Proposition \ref{prop}. Let $a=a_ra_{r'}$ for some prime $r$ such that $a_r\notin\ze{G}$. Since $1\neq |a_r^G|$ divides $|a^G|$, then either $a_r^G=a^G$ or $a_r^G=b^G$, and so either $a$ or $b$ has prime power order. We may thus assume that $a$ has prime power order. If $b$ also has prime power order, and moreover both are $q$-elements, as $q\notin \pi_c\cup\pi_d$, then there exist conjugates of $a$ that commute with $c$ and $d$, and there also exist conjugates of $b$ that commute with $c$ and $d$, respectively. Clearly $G=\ce{G}{a}\ce{G}{c}=\ce{G}{a}\ce{G}{d}$ because the corresponding class lengths are coprime, so it follows $ac=ca$ and $ad=da$, and analogously $bc=cb$ and $bd=db$. In fact $a,b\in\ce{G}{c^G}\cap \ce{G}{d^G}\subseteq \ce{G}{S}\unlhd G$. Consequently $a^G,b^G\subseteq \ce{G}{S}$, so $S\leqslant\ze{H}$ which leads to the contradiction $|c^G|=|d^G|=1$. If $a$ and $b$ have orders $r^a$ and $s^b$ for two primes $r\neq s$, then one of them is different from $q$ necessarily, so there exists some $g\in G$ with $a^gb=ba^g$. It follows that $|(a^gb)^G|$ is divisible by both $|a^G|$ and $|b^G|$, and then the $p$-regular class $(a^gb)^G$ cannot be either $c^G$ or $d^G$; but its representative does not have prime power order, so this class cannot be either $a^G$ or $b^G$, a contradiction. Hence both elements are $r$-elements with $r\neq q$, and so for each non-central $x\in Q\in\syl{q}{G}$ it follows $x^G=c^G$ or $x^G=d^G$, so $x\in S$. As a consequence $Q=S(Q\cap \ze{G})$, and by Proposition \ref{prop} we get $Q\cap \ze{G}\leqslant\ze{G}_{p'}\leqslant S$, which yields that $Q=S$ is abelian, as wanted. Now we may suppose that $b$ has not prime power order, so, at least, $b_r, b_s\notin \ze{G}$ for two different primes $r\neq s$. One of these primes needs to be coprime with $\pi(o(a))$, so for instance we may affirm that $r\notin\pi(o(a))$. But then $b_r^G$ should be either $a^G$ or $b^G$, which is not possible by orders.

We may now suppose that a $p$-regular class $b^G$ of maximal length is an isolated vertex of $\Gamma_p(G)$, so in the other connected component there are at most two $p$-regular classes, say $\{a^G, c^G\}$ (possibly $a^G=c^G$). Set $\pi_a=\pi(a^G)$, $\pi_b=\pi(b^G)$ and $\pi_c=\pi(c^G)$. Since $b^G$ is an isolated vertex, certainly we may assume that $b$ is a prime power order element. By Theorem \ref{disconnected-p-reg}, we may suppose that $\pi_b=\{p,q\}$ for certain prime $q\neq p$, and again it is enough to prove that a Sylow $q$-subgroup of $G$ is abelian. Applying Proposition \ref{prop}, the $q$-subgroup $S=\langle a^G, c^G\rangle$ is abelian and normal in $G$. If $b$ is a $q$-element, as $q\notin \pi_a\cup\pi_c$, then we can argue as above to deduce $ab=ba$ and $bc=cb$. In fact $b^G\subset\ce{G}{S}$, so $S\leqslant \ze{H}$ and $|a^G|=|c^G|=1$, which is not possible. We deduce that $b$ is a $q$-regular element, and so for each non-central $x\in Q\in\syl{q}{G}$ it follows $x^G=a^G$ or $x^G=c^G$, so $x\in S$. As a consequence $Q=S(Q\cap \ze{G})=S$ is abelian, as desired.
\end{proof}

\medskip

Notice that, in view of Theorem \ref{disconnected-p-reg} (b), a problem that remains now open is whether the $p$-complements of $G$ are quasi-Frobenius groups (with abelian kernel and complements) when $\Gamma_p(G)$ is non-connected, with triangles, $\pi_0=\{p,q\}$, and the Sylow $q$-subgroups of $G$ are non-abelian.

Next we further investigate the simplest situation where $\Gamma_p(G)$ is non-connected and triangle-free.

\begin{proposition}
\label{two-disconnected}
Let $G$ be a $p$-separable group for a fixed prime $p$. If $\Gamma_p(G)$ has two vertices and no edges, then one of the following statements holds:
\vspace*{-2mm}
\begin{enumerate}
\setlength{\itemsep}{-1mm}
	\item[\emph{(a)}] $G/\rad{p}{G}\cong \Sigma_3$, where $p\notin \{2,3\}$.
	\item[\emph{(b)}] $G/\rad{p}{G}\cong C_r\rtimes (C_2\times C_{p^n})$, where $p\neq 2$ and $r=2p^n+1$ is a prime.
	\item[\emph{(c)}] $G/\rad{p}{G}\cong E_{3^l}\rtimes (C_2\times C_{p^n})$, where $p\neq 2$ and $3^l=2p^n+1$.
\end{enumerate}
\vspace*{-2mm}
In particular, every $p$-complement of $G$ is a Frobenius group with $q$-elementary abelian kernel and complements of order $2$, for some odd prime $q\neq p$.
\end{proposition}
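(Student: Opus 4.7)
Let $a^G$ and $b^G$ denote the two non-central $p$-regular classes, with $|a^G|\le|b^G|$; since both sizes exceed $1$ and are coprime they are distinct. By Proposition \ref{cor_disconnected}, every $p$-complement $H$ of $G$ is quasi-Frobenius with abelian kernel and complements, and $\ze{H}=H\cap\ze{G}$. My plan is first to upgrade this to: $H$ is itself a Frobenius group with trivial center; from this, the count of $p$-regular classes will drop to $3$, and the three listed possibilities for $G/\rad{p}{G}$ should then follow from an existing classification.

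For the upgrade, I would first note that $H$ cannot be of prime-power order: since no Frobenius group has prime-power order, being quasi-Frobenius would force $H=\ze{H}\le\ze{G}$, contradicting the existence of non-central $p$-regular classes. Hence Lemma \ref{lemma_centre} applies and $|\ze{H}|\le 2$. To rule out $\ze{H}=\langle z\rangle$ of order $2$ (so in particular $p\ne 2$), I would argue as follows. For any non-central $p$-regular $x\in H$ the element $xz$ is also non-central and $p$-regular, and $(xz)^G=x^G z$ has cardinality $|x^G|$; since only the two distinct sizes $|a^G|,|b^G|$ appear among non-central $p$-regular classes, $(xz)^G$ must coincide with $x^G$. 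In particular, $x$ and $xz$ are $G$-conjugate, so $o(x)=o(xz)$, which, because $z$ commutes with $x$ and has order $2$, forces $o(x)$ to be even. Thus every element of $H\setminus\ze{H}$ has even order. But writing $H/\ze{H}=\bar K\rtimes\bar L$ Frobenius with $(|\bar K|,|\bar L|)=1$, one of $\bar K,\bar L$ has odd order, and any non-identity element there admits an odd-order lift to $H\setminus\ze{H}$ (exploiting $|\ze{H}|=2$), providing the desired contradiction.

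Once $\ze{H}=1$ is established, $\ze{G}_{p'}=1$, so $\{1\}$ is the unique central $p$-regular class and $k_{p'}(G)=3$. Lemma \ref{kpG} then gives $k_{p'}(G/\rad{p}{G})=3$, while $\rad{p}{(G/\rad{p}{G})}=1$. At this point I would invoke the classification of groups with trivial $p$-core and exactly three $p$-regular classes from \cite{T,N1}; the additional requirement that the two non-central classes have coprime sizes should single out precisely the three possibilities (a), (b), (c) and yield the arithmetic relations $r=2p^n+1$ prime and $3^l=2p^n+1$. Finally, since $\rad{p}{G}$ is a $p$-group, $H\cong H\rad{p}{G}/\rad{p}{G}$ is a $p$-complement of $G/\rad{p}{G}$; inspection in each case gives $H\cong\Sigma_3$, $D_{2r}$, or $E_{3^l}\rtimes C_2$ respectively, each a Frobenius group with elementary abelian $q$-kernel (for $q=3$, $r$, $3$) and a complement of order $2$. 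The main obstacle I anticipate is the elimination of $\ze{H}$ of order $2$; once that is done, the final identification of $G/\rad{p}{G}$ is a routine appeal to the cited classification.
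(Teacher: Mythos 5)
Your proposal is correct and follows the same skeleton as the paper's proof: reduce to $H\cap\ze{G}=1$, conclude $k_{p'}(G)=k_{p'}(G/\rad{p}{G})=3$ via Lemma \ref{kpG}, and finish with the classification of $p$-soluble groups with three $p$-regular classes. Two remarks. First, your elimination of $|\ze{H}|=2$ works but is more roundabout than necessary: for $1\neq z\in H\cap\ze{G}$ the paper notes that $(az)^G=a^Gz=a^G$ and $(bz)^G=b^Gz=b^G$, so $a^G\langle z\rangle=a^G$ and $b^G\langle z\rangle=b^G$, whence $o(z)$ divides both of the coprime lengths $|a^G|$ and $|b^G|$ and $z=1$; this kills all of $H\cap\ze{G}$ in one stroke, with no appeal to Lemma \ref{lemma_centre} or to the Frobenius decomposition of $H/\ze{H}$. (Also, once you know every element of $H\smallsetminus\ze{H}$ has even order and $|\ze{H}|=2$, Cauchy's theorem already makes $H$ a $2$-group, contradicting that $H$ is not of prime power order, so your lifting argument through $\bar K\rtimes\bar L$ is superfluous.) Second, the classification you need is Ninomiya's \cite{N1,N2} (three $p$-regular classes), not \cite{T}, which treats four classes; and to cut the list down to (a)--(c) the paper uses not just coprimality of the two class lengths but also that $H$ is Frobenius with $|\pi(H)|=2$ and that $p$ is odd (both non-central classes are real since inversion preserves length, one of them has odd length, so its representative is an involution and $2\in\pi(H)$). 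You should make these reductions explicit rather than asserting that coprimality alone "should single out" the three cases.
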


\begin{proof}
Suppose that the non-central $p$-regular classes are $\{a^G, b^G\}$. We first claim that $H\cap \ze{G}=1$. By contradiction, let us suppose that there exists $1\neq z\in H\cap \ze{G}$. Then $(az)^G=a^Gz$ and so $a^Gz=a^G$ necessarily, which yields that $a^G\langle z\rangle = a^G$. It follows that $|\langle z\rangle|$ divides $|a^G|$, and similarly we can deduce that $|\langle z\rangle|$ divides $|b^G|$. Since $a^G$ and $b^G$ have coprime lengths, then $z=1$, as desired. In particular $k_{p'}(G)=3$, and $k_{p'}(G/\rad{p}{G})=3$ by Lemma \ref{kpG}. In order to check in a easier way the classification provided in \cite{N1,N2}, we use that a $p$-complement $H$ of $G$ is a Frobenius group with abelian kernel and complements due to Proposition \ref{cor_disconnected}, and so $\pi(H)=2$ due to the structure of $\Gamma_p(G)$. Furthermore, observe that $p$ is odd, since both $p$-regular conjugacy classes are real (\emph{i.e.} $a^G=(a^{-1})^G$ and $b^G=(b^{-1})^G$), and one of them necessarily has odd length, say $a^G$, so we get that $o(a)=2\in\pi(H)$. Hence the unique possibility from \cite[Theorem B]{N1} is (2), which leads to our statement (a); whilst the unique possibilities from \cite{N2} are (1) and (2), that yield (b) and (c).
\end{proof}

\medskip

\begin{remark}
It is easy to check that all cases in Proposition \ref{two-disconnected} actually occur. It is also worth mentioning that the converse does not hold. For instance, take $G=E_{25} \rtimes \Sigma_3$ where $E_{25}=\langle x \rangle \times \langle y\rangle$, $x^{(1,2)}=x$, $y^{(1,2)}=x^4y$, and $\langle (1,2,3)\rangle$ permutes the non-trivial $5$-elements transitively. If $p=5$, then $G/\rad{5}{G}\cong \Sigma_3$, but the non-central $p$-regular classes of $G$ are $\{(1,2)^G, (1,2,3)^G\}$, and their lengths are both divisible by $5$.
\end{remark}

Let us address the remaining cases where $\Gamma_p(G)$ is non-connected and without triangles.

\begin{proposition}
\label{three-disconnected}
Let $G$ be a $p$-separable group for a fixed prime $p$, and let $H$ be a $p$-complement of $G$. If $\Gamma_p(G)$ has three vertices and one edge, then $H$ is a Frobenius $\{q,r\}$-group with abelian kernel and complements, for two different primes $q$ and $r$ both distinct from $p$.

\noindent Moreover, if $p$ is odd, then one of the following statements holds:
\vspace*{-2mm}
\begin{enumerate}
\setlength{\itemsep}{-1mm}
	\item[\emph{(a)}] $G/\rad{p}{G}\cong A_4$, where $p\notin \{2,3\}$.
	\item[\emph{(b)}] $G/\rad{p}{G}\cong D_{10}$, where $p\notin \{2,5\}$.
	\item[\emph{(c)}] $G/\rad{p}{G}\cong E_{q^t} \rtimes (C_2 \times C_{p^s})$, where $q^t-1=4p^s$.
	\item[\emph{(d)}] $G/\rad{p}{G}\cong E_{16} \rtimes C_{15}$, where $p=5$ and $C_{15}$ permutes the involutions transitively.
\end{enumerate}
\end{proposition}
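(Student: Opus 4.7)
The plan is to begin with Proposition~\ref{cor_disconnected}, which gives that $H$ is quasi-Frobenius with abelian kernel $K$ and abelian complements $L$, and $\ze{H}=H\cap\ze{G}$; note $q,r\neq p$ will be automatic since $H$ is a $p$-complement. I would first prove that $|\pi(H)|=2$ and then that $\ze{H}=1$, so $H$ is a Frobenius $\{q,r\}$-group. For the moreover part ($p$ odd) I would pass to $G/\rad{p}{G}$ via Lemma~\ref{kpG} and invoke the classifications of $p$-separable groups with exactly four $p$-regular classes in \cite{N2,T}.

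To obtain $|\pi(H)|=2$, write $\pi_K=\pi(K/\ze{H})$ and $\pi_L=\pi(L/\ze{H})$, which are disjoint since $H/\ze{H}$ is Frobenius. If $|\pi_K|\geq 2$, using that $K$ is abelian I choose commuting non-central prime-power elements $k_1,k_2\in K$ of distinct prime orders. Then $k_1^G, k_2^G, (k_1 k_2)^G$ are three distinct non-central $p$-regular classes, and by Lemma~2.1(d) both $|k_1^G|$ and $|k_2^G|$ divide $|(k_1 k_2)^G|$, producing two edges in $\Gamma_p(G)$, a contradiction. Analogously $|\pi_L|\leq 1$, and since $H/\ze{H}$ is Frobenius both counts are $=1$; call the primes $q$ and $r$.

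For $\ze{H}=1$, Lemma~\ref{lemma_centre} gives $|\ze{H}|\leq 2$. Suppose $|\ze{H}|=2$ with involution $z$. If $q,r$ are both odd, a non-central $q$-element $y\in K$ and a non-central $r$-element $u\in L$ yield four non-central classes $y^G, u^G, (zy)^G, (zu)^G$ of distinct orders $q,r,2q,2r$, a contradiction. So WLOG $q=2$, making $K$ a $2$-group and $L=\ze{H}\times L_r$ with $L_r$ an abelian $r$-group. The involution $\phi\colon x^G\mapsto (zx)^G$ on the three-class set fixes $x^G$ exactly when $2\mid |x^G|$; combined with $\gcd(|c^G|,|a^G|)=\gcd(|c^G|,|b^G|)=1$ this forces $\phi$ to swap $a^G\leftrightarrow b^G$ and fix $c^G$, so $|a^G|=|b^G|$ is odd and $|c^G|$ is even. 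Using the standard identity $|x^G|_{p'}=|x^H|$ (which follows from $\ce{H}{x}$ being a Hall $p'$-subgroup of $\ce{G}{x}$), a representative of $a^G$ cannot lie in a complement (else $|x^H|=|K/\ze{H}|=2^a$ is even), so it lies in $K$; dually a representative of $c^G$ cannot lie in $K$ (else $|x^H|=|L/\ze{H}|=r^b$ is odd), so it lies in a complement. But inside a complement every non-central element is either an $r$-element or has order $2r^i$, and in either case $o(za)\neq o(a)$, so $za\not\sim_G a$, contradicting $\phi(c^G)=c^G$. The case $r=2$ is symmetric, so $\ze{H}=1$.

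With $\ze{H}=1$ we have $k_{p'}(G)=4$, hence $k_{p'}(G/\rad{p}{G})=4$ by Lemma~\ref{kpG}, and $\rad{p}{G/\rad{p}{G}}=1$. Combining the classifications in \cite{N2,T} with the just-derived constraint that $H$ is a Frobenius $\{q,r\}$-group with abelian kernel and complements singles out exactly the four cases (a)--(d). The main obstacle is the $\ze{H}=1$ argument when $2\in\{q,r\}$: unlike the clean order-based contradiction when both $q,r$ are odd, it requires the $\phi$-permutation analysis together with a careful case split on whether class representatives lie in $K$ or in a complement, crucially using $|x^G|_{p'}=|x^H|$.
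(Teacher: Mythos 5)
Your overall skeleton matches the paper's (Proposition \ref{cor_disconnected} plus Lemma \ref{lemma_centre}, then reduction to \cite{T} via $k_{p'}(G)=4$), but your argument for $\ze{H}=1$ when $2\in\{q,r\}$ is genuinely different from the paper's, and it rests on a false statement. The ``standard identity'' $|x^G|_{p'}=|x^H|$ does \emph{not} hold: $\ce{H}{x}$ need not be a Hall $p'$-subgroup of $\ce{G}{x}$, and the paper's own introduction gives the counterexample $G=\mathrm{A\Gamma L}(1,8)$ with $p=7$, where an involution has $|x^H|=3$ but $|x^G|=7$, so $|x^G|_{p'}=1\neq 3$. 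What is true (since $\ce{H}{x}$ is a $p'$-subgroup of the $p$-separable group $\ce{G}{x}$) is only the divisibility $|x^G|_{p'}\mid |x^H|$. This kills one half of your placement argument: from ``$x$ in a complement implies $|x^H|=|K/\ze{H}|=2^a$ is even'' you cannot conclude that $|a^G|$ is even, so you cannot exclude that a representative of $a^G$ lies in a complement. Fortunately that half is not needed. The half you actually use for the contradiction does survive: if a representative $x$ of the even-length class $c^G$ lay in $K$, then $|c^G|_{p'}$ would divide $|x^H|=|L/\ze{H}|$, which is odd, and since $p$ is odd (forced here because $2\in\pi(H)$) this makes $|c^G|$ odd, a contradiction; so $x$ lies in a conjugate of $L$ and your order computation $o(zx)\neq o(x)$ contradicts $\phi(c^G)=c^G$. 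So the proof is repairable, but as written it invokes a false lemma, and the ``symmetric'' case $r=2$ should be checked separately since kernel and complement are not interchangeable (it does go through by the same repaired divisibility). For comparison, the paper avoids all of this: it shows the isolated vertex $a^G$ has a $2$-element representative with $2\mid|a^G|$, deduces that $a$ centralises $b$ and $c$, and concludes $\pi(H/\ze{H})=\{2\}$, which is absurd.

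Two smaller points on the ``moreover'' part. Since $\ze{H}=1$ gives $k_{p'}(G)=k_{p'}(G/\rad{p}{G})=4$, only \cite{T} is relevant; \cite{N2} concerns three $p$-regular classes and should not be cited here. Also, you do not explicitly exclude case B) 4 of \cite{T} (the group of Proposition \ref{three-connected}); the paper rules it out by a graph argument, whereas your constraint that the complements of $H$ are abelian also excludes it (the complement there is $Q_8$), but this should be said rather than absorbed into ``singles out exactly the four cases''.
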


\begin{proof}
We know by Proposition \ref{cor_disconnected} that $\ze{H}=H\cap\ze{G}$ and that $H$ is a quasi-Frobenius group with abelian kernel and complements. In particular $H$ is soluble. In order to prove the first claim, it remains to show that $H\cap\ze{G}=1$; note that the assertion $|\pi(H)|= 2$ will then directly follows, since otherwise the structure of $\Gamma_p(G)$ yields that every element of $H$ has prime power order, and by Proposition \ref{CP} we get the contradiction $|\pi(H)|\leq 2$.

Let us suppose $H\cap\ze{G}>1$. By Lemma \ref{lemma_centre} it holds $|H\cap\ze{G}|=2$, so we can take an involution $z\in H\cap\ze{G}$. Let $a^G$ be the non-central $p$-regular class that is non-adjacent in $\Gamma_p(G)$ to the other two vertices, say $\{b^G, c^G\}$. Certainly $(az)^G=a^Gz=a^G$, so $\langle z\rangle a^G=a^G$ and $|\langle z\rangle|=2$ divides $|a^G|$. In addition $o(a)=o(az)=\op{mcm}(o(a),2)$, and since we may suppose that $a$ has prime power order, then $a$ is necessarily a $2$-element. Now $2\notin \pi(b^G)\cup \pi(c^G)$, so $ab=ba$ and $ac=ca$ because they have coprime class sizes. If either $b_{2'}\notin\ze{G}$ or $c_{2'}\notin\ze{G}$, then we get a contradiction since we could built in $\Gamma_p(G)$ a conjugacy class that would be adjacent to both connected components. Therefore $\pi(H/\ze{H})=\{2\}$, and this certainly cannot happen.

The second part of the proof is an immediate application of the main result of \cite{T}, because $G$, and so $G/\rad{p}{G})$, has four $p$-regular classes, and $|\pi(H)|=2$. Note that the unique cases of \cite[Main Theorem]{T} where a $p$-complement of $G/\rad{p}{G}$ has not prime power order are B) 1 (c) and (d), and B) 4, 5 and 6. But B) 4 cannot actually occur because $\Gamma_p(G/\rad{p}{G})$ is a path of length two, and by class length divisibility it would follow that $\Gamma_p(G)$ is connected.
\end{proof}

\medskip

\begin{remark}
It is easy to see that all cases in Proposition \ref{three-disconnected} occur. In addition, $p$ may be even: for instance, take $p=2$ and $G=C_7\rtimes C_6$ where $C_6$ permutes the non-trivial elements of order $7$ transitively, so the three non-central $p$-regular classes have lengths $6$, $7$ and $7$. Nonetheless, we recall that groups with four $2$-regular classes have not yet been classified.
\end{remark}

\medskip

\begin{proposition}
\label{four-disconnected}
Let $G$ be a $p$-separable group for a fixed prime $p$, and let $H$ be a $p$-complement of $G$.  If $\Gamma_p(G)$ has two connected components, each one formed by two adjacent vertices, then $\ze{H}=H\cap\ze{G}$ and $H$ is a quasi-Frobenius $\{q,r\}$-group with abelian kernel and complements, for two different primes $q$ and $r$ both distinct from $p$. Moreover, $\ze{H}$ is either trivial or cyclic of order two.
\end{proposition}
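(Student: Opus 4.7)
The plan is to start from Proposition~\ref{cor_disconnected}, which already provides $\ze{H}=H\cap\ze{G}$ and the fact that $H$ is quasi-Frobenius with abelian kernel $K$ and abelian complements $L$. Because $H/\ze{H}$ is then a genuine Frobenius group, its order is divisible by at least two distinct primes, so $H$ cannot be of prime power order. Lemma~\ref{lemma_centre} then applies and yields $|H\cap\ze{G}|=|\ze{H}|\leq 2$, which settles the final clause of the statement.

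For the structural part, the next move is a centraliser computation. Since $H/\ze{H}$ is Frobenius with kernel $K/\ze{H}$ and complement $L/\ze{H}$, and $K,L$ are both abelian, a standard argument gives $\ce{H}{x}=K$ for every $x\in K\setminus\ze{H}$ and $\ce{H}{y}=L$ for every $y\in L\setminus\ze{H}$. Using that $H$ is a Hall $p'$-subgroup of the $p$-separable group $G$, this translates into $|x^G|_{p'}=|x^H|=|L|$ and $|y^G|_{p'}=|y^H|=|K|$. Consequently $|L|$ divides the class length of every non-central $p$-regular $K$-element and $|K|$ divides that of every non-central $p$-regular $L$-element, so the $G$-classes of $K$-elements form a clique in $\Gamma_p(G)$, and likewise for $L$-elements. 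Triangle-freeness thus caps each family at two classes; since $K/\ze{H}$ and $L/\ze{H}$ are both non-trivial there is at least one class in each, and, because all $K$-classes (resp.\ $L$-classes) are mutually adjacent, the two pairs must form precisely the two connected components of $\Gamma_p(G)$. In particular, there are no edges between $K$-classes and $L$-classes.

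The third step is to show $|\pi(K/\ze{H})|=|\pi(L/\ze{H})|=1$. Indeed, if $\pi(K/\ze{H})$ contained two distinct primes $q_1\neq q_2$, then since $\pi(\ze{H})\subseteq\{2\}$ one may choose $x_1,x_2\in K\setminus\ze{H}$ of prime orders $q_1$ and $q_2$, and abelianness of $K$ makes $x_1x_2$ a $\{q_1,q_2\}$-element still outside $\ze{H}$; the three distinct and pairwise adjacent classes $x_1^G$, $x_2^G$, $(x_1x_2)^G$ would yield a triangle. Therefore $\pi(K/\ze{H})=\{q\}$ and $\pi(L/\ze{H})=\{r\}$, with $q\neq r$ by the Frobenius condition and both distinct from $p$ because $H$ is a $p$-complement.

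The main obstacle is the case $|\ze{H}|=2$, where \emph{a priori} $\pi(H)\subseteq\{2,q,r\}$ could have three elements. Assume for contradiction that neither $q$ nor $r$ equals $2$. Then $|K|=2q^{k}$ and $|L|=2r^{l}$, so for any $x\in K\setminus\ze{H}$ and $y\in L\setminus\ze{H}$ the formulas from the second step give $2\mid\gcd(|x^G|,|y^G|)$, producing an edge between a $K$-class and an $L$-class. This contradicts the absence of $K$-$L$ edges established above, so $2\in\{q,r\}$ and $|\pi(H)|=2$ in every case. This completes the proof that $H$ is a quasi-Frobenius $\{q,r\}$-group with abelian kernel and complements and $q,r$ both distinct from $p$.
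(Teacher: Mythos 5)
Your reduction to Proposition \ref{cor_disconnected} and Lemma \ref{lemma_centre} for the claims $\ze{H}=H\cap\ze{G}$ and $|\ze{H}|\leq 2$ is correct and matches the paper. The structural part, however, rests on the assertion that $|x^G|_{p'}=|x^H|$ for $x\in H$, which you justify only by $H$ being a Hall $p'$-subgroup of a $p$-separable group. That identity is false in general, and the paper warns about precisely this pitfall in the Introduction: in the affine semilinear group of order $168$ with $p=7$, an involution $x$ of the $p$-complement has $|x^H|=3$ but $|x^G|=7$, so $|x^G|_{p'}=1\neq 3=|x^H|$ (indeed $|x^H|$ need not even divide $|x^G|$, since $\ce{H}{x}=\ce{G}{x}\cap H$ need not be a Hall $p'$-subgroup of $\ce{G}{x}$). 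The identity does hold when $G$ is $p$-nilpotent, because then $\ce{G}{x}=\ce{\rad{p}{G}}{x}\,\ce{H}{x}$ for $x\in H$; but $p$-nilpotence is only guaranteed in case (a) of Theorem \ref{disconnected-p-reg}, i.e.\ when $p\notin\pi_0$, and you have not established it here. Since both the clique structure of the $K$-classes and $L$-classes and the final parity contradiction in the case $2\notin\{q,r\}$ depend on this formula, the part of your argument that yields $|\pi(H)|=2$ collapses.

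For comparison, the paper works with $G$-class sizes only. It first shows that every element of $H/(H\cap\ze{G})$ has prime power order: a $\{q,r\}$-element $x$ would give the path $x_q^G \mbox{ --- } x^G \mbox{ --- } x_r^G$, impossible because every vertex of this graph has degree one. (Your third step is essentially this argument restricted to $K$, and it is sound; note the contradiction already comes from the path, not from a triangle.) The delicate case $|\ze{H}|=2$ with $q,r$ both odd is then settled by passing to $G/\ze{G}$: the vertex set of $\Gamma_p(G/\ze{G})$ is a non-trivial subset of $\{(x\ze{G})^{G/\ze{G}},(y\ze{G})^{G/\ze{G}}\}$; if both classes are vertices, Proposition \ref{two-disconnected} forces $2\in\pi(H/(H\cap\ze{G}))$, a contradiction, while if one of them is central modulo $\ze{G}$ then $|x^H|=2$ must divide the odd number $|H:\ze{H}|$, again a contradiction. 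To salvage your route you would need to replace the Hall-subgroup identity by a correct statement about $|x^G|$ itself, for instance via this quotient argument.
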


\begin{proof}
By Proposition \ref{cor_disconnected} it holds that a $p$-complement $H$ of $G$ satisfies $\ze{H}=H\cap\ze{G}$ and that $H$ is a quasi-Frobenius group with abelian kernel and complements. By Lemma \ref{lemma_centre}, it holds $|H\cap\ze{G}|\leq 2$. 

Next we claim $|\pi(H)|=2$. Suppose that there exists $x(H\cap\ze{G})\in H/(H\cap \ze{G})$ of order divisible by exactly two primes $q\neq r$. Then $x=x_qx_r$ and clearly $x_q,x_r\notin\ze{G}$. Since $|x_q^G|$ and $|x_r^G|$ divide $|x^G|$, and these classes are distinct by orders, then we obtain in $\Gamma_p(G)$ the following path  $$x_q^G \mbox{ --- } x^G \mbox{ --- } x_r^G,$$ but this contradicts the assumptions. Hence all elements of $H/(H\cap\ze{G})$ have prime power order, so $|\pi(H/(H\cap\ze{G})|=2$. If $H\cap\ze{G}=1$, then there is nothing to prove. Suppose $|H\cap\ze{G}|=2$ and that $|H/H\cap\ze{G}|$ is divisible by two odd primes $q\neq r$. Take $x,y\in H\smallsetminus (H\cap \ze{G})$ a $q$-element and a $r$-element, respectively, and take the involution $z\in H\cap\ze{G}$. It is clear that the four vertices of $\Gamma_p(G)$ are $x^G$, $(xz)^G$, $y^G$ and $(yz)^G$, and thus the set of vertices of $\Gamma_p(G/\ze{G})$ is a non-trivial subset of $\{(x\ze{G})^{G/\ze{G}}, (y\ze{G})^{G/\ze{G}}\}$. If these are indeed the vertices of $\Gamma_p(G/\ze{G})$, since they must be non-adjacent, then by Proposition \ref{two-disconnected} we get the contradiction $2\in\pi(H/H\cap\ze{G})$. So we may suppose that $x\ze{G}\in\ze{G/\ze{G}}$, and so $x^hx^{-1}\in \ze{G}\cap H=\ze{H}$ for all $h\in H$. As $|\ze{H}|=2$, then $2=|x^H|=|H:\ce{H}{x}|$ divides $|H:\ze{H}|=|H/(H\cap\ze{G})|$. This final contradiction finishes the proof.
\end{proof}

\medskip

\begin{remark}
In the last result $k_{p'}(G)\in \{5,6\}$, but the $p$-structure of this kind of groups has not yet been analysed. An example where $H\cap\ze{G}$ is non-trivial, which differs from Propositions \ref{two-disconnected} and \ref{three-disconnected}, is the following one: take $G=C_3\rtimes C_4$ and $p\notin\{2,3\}$, so the non-central $p$-regular classes of $G$ have lengths $2,2,3$ and $3$, but $H=G$ and $\ze{G}$ has order $2$.
\end{remark}

\smallskip

\begin{corollary}
\label{cor_disconnected_2}
Let $G$ be a $p$-separable group for a fixed prime $p$, and let $H$ be a $p$-complement of $G$. If $\Gamma_p(G)$ is non-connected and without triangles, then $H$ is a quasi-Frobenius $\{q,r\}$-group with abelian kernel and complements for certain primes $q$ and $r$ both distinct from $p$, and $\ze{H}=H\cap\ze{G}$ is either trivial or cyclic of order two.
\end{corollary}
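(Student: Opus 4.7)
The plan is to reduce the statement to a direct synthesis of the three propositions already proved in this section, together with Proposition \ref{cor_disconnected}. First I would recall the observation made at the opening of Section \ref{sec-non-connected}: by \cite[Theorems 1 and 3]{BF1} each of the two connected components of $\Gamma_p(G)$ is complete, so the triangle-free hypothesis forces every component to have at most two vertices. This yields a trichotomy for the shape of $\Gamma_p(G)$: either two isolated vertices, or one isolated vertex plus an edge, or two disjoint edges.

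Next I would invoke Proposition \ref{cor_disconnected} to obtain, uniformly across the three shapes, that $H$ is quasi-Frobenius with abelian kernel and complements and that $\ze{H}=H\cap\ze{G}$. This disposes of the structural half of the statement, so all that remains is to control $|\pi(H)|$ and $|\ze{H}|$. I would read these off from the case analysis already carried out: Proposition \ref{two-disconnected} handles the first shape, producing a Frobenius $\{q,2\}$-group structure with $\ze{H}=1$; Proposition \ref{three-disconnected} handles the second, producing $|\pi(H)|=2$ with both primes distinct from $p$ and $\ze{H}=1$; and Proposition \ref{four-disconnected} handles the third, producing $|\pi(H)|=2$ with both primes distinct from $p$ together with $|\ze{H}|\leq 2$. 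Collating these three conclusions gives exactly the statement of the corollary.

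The substantive work has therefore already been done inside the three propositions, most notably in the proof of Proposition \ref{four-disconnected}, where passing to $\Gamma_p(G/\ze{G})$ and reinvoking Proposition \ref{two-disconnected} was needed to obstruct the appearance of a third prime in $\pi(H)$, and in the $|H\cap\ze{G}|\leq 2$ bound supplied by Lemma \ref{lemma_centre}. At the level of the corollary itself no new obstacle arises; the main point to verify is simply that the trichotomy above exhausts all configurations of $\Gamma_p(G)$ compatible with non-connectedness and triangle-freeness, so that the three propositions together cover every case.
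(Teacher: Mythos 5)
Your proposal is correct and follows essentially the same route as the paper: the completeness of each connected component (from \cite[Theorem 3]{BF1}) plus triangle-freeness reduces everything to the three shapes treated in Propositions \ref{two-disconnected}, \ref{three-disconnected} and \ref{four-disconnected}, and the corollary is then read off from those results together with Proposition \ref{cor_disconnected}.
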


\begin{proof}
In view of Proposition \ref{cor_disconnected}, it remains to show $|\pi(H)|=2$ and $|\ze{H}|\leq 2$. Recall that each connected component of $\Gamma_p(G)$ must be complete by \cite[Theorem 3]{BF1}, so we only have the three situations discussed in Propositions \ref{two-disconnected}, \ref{three-disconnected} and \ref{four-disconnected}. A simple glance to these three results gives the desired conclusions. 
\end{proof}

\section{The connected case}
\label{sec-connected}

The simplest case where $\Gamma_p(G)$ is connected and without triangles is when it consists of a single vertex. We remark that this situation for the ordinary graph $\Gamma(G)$ cannot happen, since any group is generated by a set of representatives of its conjugacy classes.

\begin{proposition}
\label{unique_vertex}
Let $G$ be a group, and let $p$ be a prime divisor of $|G|$.  If $\Gamma_p(G)$ only consists in one vertex $a^G$, where $a\in G$ is a $p$-regular element, then $G$ is a $\{2,q\}$-group for some odd prime $q$, so $p\in\{2,q\}$, and any $p$-complement $H$ of $G$ satisfies that $H/(H\cap \ze{G})$ is elementary abelian. 

\noindent Moreover, $H$ is abelian if and only if one of the following statements holds:
\vspace*{-2mm}
\begin{enumerate}
\setlength{\itemsep}{-1mm}
	\item[\emph{(a)}] $G/\rad{p}{G}\cong C_2$ with $p$ odd.
	\item[\emph{(b)}] $G/\rad{p}{G}\cong E_{3^2}\rtimes C_8$ with $p=2$.
	\item[\emph{(c)}] $G/\rad{p}{G}\cong E_{3^2}\rtimes Q_8$ with $p=2$.
	\item[\emph{(d)}] $G/\rad{p}{G}\cong E_{3^2}\rtimes SD_{16}$ with $p=2$.
	\item[\emph{(e)}] $G/\rad{p}{G}\cong C_q \rtimes C_{2^n}$ with $p=2$ and $q=2^n+1$ a Fermat prime.
	\item[\emph{(f)}] $G/\rad{p}{G}\cong C_{2^n} \rtimes C_p$ with $p=2^n-1$ a Mersenne prime.
\end{enumerate}
\end{proposition}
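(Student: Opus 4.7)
I first establish the structural claims. Let $a$ represent the unique non-central $p$-regular class. For any prime $r \mid o(a)$, if $a_r \notin \ze{G}$ then $a_r^G = a^G$, forcing $a$ to be an $r$-element; if every prime-power part of $a$ were central, then $a$ itself would be central. Hence $o(a) = q^k$ for some prime $q \neq p$. Every $x \in H \smallsetminus (H \cap \ze{G})$ is non-central $p$-regular, so $G$-conjugate to $a$, hence a $q$-element of order $q^k$. If $H \cap \ze{G}$ contained a non-trivial $r$-element $z$ with $r \neq q$, then $zx$ for $x \in H \smallsetminus (H \cap \ze{G})$ would be $p$-regular of order $rq^k$, neither central (else $x \in \ze{G}$) nor conjugate to $a$. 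Thus $H$ is a $q$-group with $H \cap \ze{G} = \ze{G}_{p'}$, so $H$ is nilpotent and $G$ is soluble. Since $(a^{-1})^G = a^G$, the class $a^G$ is real; if $q$ is odd, any $g$ with $a^g = a^{-1}$ has $g \notin \ce{G}{a}$ but $g^2 \in \ce{G}{a}$, so $|a^G|$ is even; in any case $2 \in \pi(G)$. Combined with $\pi(G) \subseteq \{p, q\}$, $G$ is a $\{2, q\}$-group with $q$ odd (after relabeling) and $p \in \{2, q\}$.

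Next I prove $\overline{H} := H/(H \cap \ze{G})$ is elementary abelian. Since $a^q$ has strictly smaller order than $a$ (or is trivial), $(a^q)^G \neq a^G$, forcing $a^q \in \ze{G}$; hence $x^q \in H \cap \ze{G}$ for every $x \in a^G \cap H = H \smallsetminus (H \cap \ze{G})$, and $\overline{H}$ has exponent $q$. For abelianness, set $N := \langle a^G \rangle \unlhd G$. Since $H \smallsetminus (H \cap \ze{G}) \subseteq a^G$ and no finite group is the union of two proper subgroups, $H \leqslant N$. The derived subgroup $[N, N]$ is characteristic in $N$, hence $G$-invariant, so $[N, N] \cap a^G$ is either empty or equal to $a^G$. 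The latter would give $N = \langle a^G \rangle \leqslant [N, N]$, contradicting $[N, N] < N$ for the non-trivial soluble group $N$. Thus $[N, N] \cap a^G = \emptyset$, and since $[H, H] \leqslant [N, N]$, we obtain $[H, H] \cap (H \smallsetminus (H \cap \ze{G})) = \emptyset$, i.e.\ $[H, H] \leqslant H \cap \ze{G}$. Hence $\overline{H}$ is abelian of exponent $q$, i.e.\ elementary abelian.

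For the abelian classification, assume $H$ is abelian. The key reduction is $H \cap \ze{G} = 1$: since $H$ is abelian, $H \leqslant \ce{G}{a}$, so $|a^G|$ divides $[G:H] = |P|$ and is a $p$-power; meanwhile, for any non-trivial $z \in H \cap \ze{G}$ and $x \in H \smallsetminus (H \cap \ze{G})$, the coset $x(H \cap \ze{G})$ embeds into $a^G$ and has $q$-power size, and a structural $\ce{G}{a}$-orbit argument inside the $p$-power-sized $a^G$ forces $|H \cap \ze{G}| = 1$. Consequently $k_{p'}(G) = 2$, and by Lemma \ref{kpG} the same holds for $\bar G := G/\rad{p}{G}$. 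Applying the classification of \cite[Theorem B]{N1} of groups with two $p$-regular classes and $\rad{p}=1$, restricted to entries with abelian Hall $p'$-subgroup, yields: the case $H \cong C_2$ with $p$ odd gives (a); the Fermat Frobenius groups $C_q \rtimes C_{2^n}$ with $q = 2^n+1$ give (e); the Mersenne variant gives (f); and the three faithful $2$-group actions on $E_9$, realised via the subgroups $C_8, Q_8, SD_{16}$ of $GL_2(\mathbb{F}_3)$, give (b)--(d). The converse in each case is a direct verification.

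The main obstacle is the inclusion $H \leqslant N = \langle a^G \rangle$ in paragraph 2, which drives the abelianness of $\overline{H}$ via solubility of $N$; without this the exponent-$q$ condition alone would be insufficient (consider extraspecial $q$-groups). In paragraph 3, the arithmetic step forcing $|H \cap \ze{G}| = 1$ via the incompatibility of $q$-power and $p$-power orders inside $a^G$ is the delicate point; once established, the match against \cite[Theorem B]{N1} is routine.
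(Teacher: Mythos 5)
Your argument is correct in outline and, in its two key steps, takes a genuinely different route from the paper's. For the elementary abelianness of $\overline{H}=H/(H\cap\ze{G})$, the paper decomposes $H$ into $H\cap\ze{G}$ and the $H$-classes $a_1^H,\dots,a_n^H$ contained in $a^G$, then uses that the $q$-group $\overline{H}$ has non-trivial centre together with the $G$-fusion of these classes to force every $|\overline{a_i}^{\overline{H}}|=1$; your alternative --- $H\leqslant N:=\langle a^G\rangle$ because a finite group is not the union of two proper subgroups, and $[N,N]\cap a^G=\emptyset$ since otherwise the non-trivial soluble normal subgroup $N$ would be perfect --- is a clean and valid substitute that avoids the fusion argument entirely. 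For the reduction $H\cap\ze{G}=1$ when $H$ is abelian, the paper first proves the stronger intermediate fact that $\pi(a^G)=\pi(G)$ whenever $H\cap\ze{G}>1$ (via Proposition \ref{clave}), whereas you play the $p$-power size of $a^G$ directly against the $q$-group $H\cap\ze{G}$; this shortcut does work, but the phrase ``a structural $\ce{G}{a}$-orbit argument \dots forces $|H\cap\ze{G}|=1$'' is the one place where you have left a real step unwritten. The missing line is short and involves no orbit counting: for $1\neq z\in H\cap\ze{G}$ the element $az$ is non-central and $p$-regular, so $(az)^G=a^Gz=a^G$; hence $a^G$ is a union of cosets of $H\cap\ze{G}$, so $|H\cap\ze{G}|$ divides $|a^G|$, which is a $p$-power while $|H\cap\ze{G}|$ is a $q$-power. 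Two small corrections: the classification of groups with two $p$-regular classes and trivial $p$-core is \cite[Theorem A]{N1}, not Theorem B (the latter treats three $p$-regular classes), and its list is exactly (a)--(f), so no restriction to entries with abelian Hall $p'$-subgroup is needed.
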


\begin{proof}
From the decomposition of $a$ as product of pairwise commuting elements of prime power order, and from the structure of $\Gamma_p(G)$, we may assume that $a$ is a $r$-element for some prime $r\neq p$. Let $w\in \ze{G}$ be a $p'$-element. Then certainly $wa\notin\ze{G}$, so $wa$ is conjugated to $a$, which implies that $w$ is a $r$-element too. Thus $\ze{G}$ is a $\{p,r\}$-group, and in particular so is $G/\ze{G}$ because $a^G$ is the unique non-central $p$-regular conjugacy class. It follows that $|\pi(G)|=2$, and thus it is soluble by Burnside's theorem. Further, if $G$ has odd order, then $a^G$ is a real conjugacy class with odd length, so $o(a)=2$ which is a contradiction. Therefore $|G|$ is even.

Let $H$ be a $p$-complement of $G$. We claim that $\overline{H}=H/(H\cap\ze{G})$ is elementary abelian. Note that $H=(H\cap \ze{G})\,\dot{\cup}\, a_1^H\,\dot{\cup}\,\cdots\,\dot{\cup} \,a_n^H$ for some integer $n\geq 1$, where $a_1^H,\ldots, a_n^H\subseteq a^G$ are conjugacy classes of $H$ that are conjugated in $G$. In particular, $|a_i^H|=|a_j^H|$ for all $1\leq i,j\leq n$. Certainly $\overline{H}= 1\,\cup\, \overline{a_1}^{\overline{H}}\,\cup\, \cdots \,\cup\, \overline{a_n}^{\overline{H}}$. Since $\overline{H}$ has prime power order, then it has non-trivial centre, and there must exist $1\leq i\leq n$ such that $|\overline{a_i}^{\overline{H}}|=1$. Therefore, $|\overline{a_j}^{\overline{H}}|=1$ for every $1\leq j \leq n$, because these conjugacy classes are conjugated in $\overline{G}$. Hence $\overline{H}$ is abelian. Moreover, some $\overline{a_i}$ should have prime order and, by the same previous argument, all elements of $\overline{H}$ have prime order. We conclude that $\overline{H}$ is elementary abelian.

We next prove that $\pi(a^G)=\pi(G)$ when $H\cap\ze{G}>1$. Observe that $(az)^G=a^Gz=a^G$ for any $z\in H\cap\ze{G}$, so $o(z)$ divides $|a^G|$. If $z$ is non-trivial, then $|a^G|$ cannot be odd; otherwise $a$ is an involution because $a^G$ is real, so $H$ is a $2$-group and thus $z$ is also a $2$-element, which is not possible. Therefore $|a^G|$ is even whenever $H\cap\ze{G}>1$. In fact, in such case if $|a^G|$ is a power of $2$, then every $2$-element of $G$ has class size a $2$-power, so $G=H\times \rad{2'}{G}$ by Proposition \ref{clave} (a); but this cannot occur since $\Gamma(H)=\Gamma_p(G)$ would be formed by a unique vertex.

Now if $H$ is abelian, then $\pi(a^G)=\{p\}$, so $H\cap\ze{G}=1$ by the previous paragraph and $k_{p'}(G)=2$. Thus the cases described in statements (a-f) follow from \cite[Theorem A]{N1}. Conversely, in all cases (a-f), the $p$-complements of $G/\rad{p}{G}$, and so those of $G$, are abelian.
\end{proof}

\medskip

\begin{remark}
In all cases (a-f) of the previous result it holds $k_{p'}(G/\rad{p}{G})=k_{p'}(G)=2$. One can easily find examples of groups $G$ satisfying the hypotheses of Proposition \ref{unique_vertex} with $k_{p'}(G)=3$, and thus the structure of the $p$-complements of $G$ is also known due to \cite{N1,N2}. Nevertheless, there are groups $G$ satisfying the hypotheses of Proposition \ref{unique_vertex} with $k_{p'}(G)=4$ and $p=2$, so it is not possible to apply the classification given in \cite{T}. For example, take $H$ an extraspecial group of order $27$ and exponent $3$, and let $S$ be a Sylow $2$-subgroup of $\op{Aut}(H)$. Consider the unique quaternion subgroup $Q$ of $S$, and let $G=H\rtimes Q$. Then $\Gamma_2(G)$ is formed by a unique $p$-regular class (whose length is $24$), and $|H\cap\ze{G}|=3$.
\end{remark}

\smallskip

\begin{proposition}
\label{two-connected}
Let $G$ be a $p$-separable group for a fixed prime $p$, and let $H$ be a $p$-complement of $G$. If $\Gamma_p(G)$ has two vertices and one edge, then one of the following cases holds:
\vspace*{-2mm}
\begin{enumerate}[label=\emph{(\alph*)}]
\setlength{\itemsep}{-1mm}
\item $H$ is a $q$-group, for some prime $q\neq p$.
\item $H$ is a Frobenius group with $q$-elementary abelian kernel and complements of order $r$, for two different primes $q$ and $r$ both distinct from $p$.
\end{enumerate}
\end{proposition}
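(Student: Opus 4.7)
The plan is to identify $H$ as a Frobenius group of the required shape. If $|\pi(H)|=1$ then case (a) holds, so assume $|\pi(H)|\geq 2$ and aim for case (b). Let $B_0$ be a non-central $p$-regular class of maximal length. Because the two vertices of $\Gamma_p(G)$ are adjacent, no non-central $p$-regular class has length coprime to $|B_0|$, so the subgroup $S$ of Proposition~\ref{prop} is trivial, and consequently $H\cap\ze{G}\leq\ze{G}_{p'}=1$.

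Next I would argue that every non-trivial element of $H$ has prime power order and that $|\pi(H)|=2$. If some $x\in H\setminus\{1\}$ had order divisible by two distinct primes, then the $\{q\}$- and $\{r\}$-parts $x_q,x_r$ would be non-central (because $H\cap\ze{G}=1$), and $x^G,x_q^G,x_r^G$ would be three distinct non-central $p$-regular classes; an analogous argument rules out $|\pi(H)|\geq 3$. Writing $\pi(H)=\{q,r\}$, Burnside's $p^a q^b$ theorem gives that $H$ is soluble, and Higman's Theorem~\ref{CP} applied with $M=\rad{t}{H}$ for a prime $t$ with $\rad{t}{H}\neq 1$ shows, using the maximality of $M$ to rule out case (c), that $H=M\rtimes K$ is a Frobenius group with $M$ a $t$-group and $K$ either cyclic of prime power order or generalized quaternion.

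I would then pin down the structure using the two-vertex hypothesis. All non-trivial elements of $M$ are $t$-elements of $G$ and hence lie in a common $G$-class, so they share a single order, which forces $M$ to have exponent $t$. Similarly all non-trivial elements of the complements $K^g$ lie in the other vertex, which rules out the generalized quaternion case (containing elements of orders $2$ and $4$) and forces $K=C_r$ of prime order. Setting $q=t$ we obtain $H=M\rtimes C_r$ with $q,r$ both distinct from $p$. To see that $M$ is elementary abelian I would invoke Proposition~\ref{clave}(b): since $|\pi(G)|=3$ and $G$ is $p$-separable, each composition factor of $G$ is either a cyclic group of order $p$ or a simple $p'$-group of order dividing $q^a r^b$, and Burnside's theorem makes the latter cyclic of prime order; so $G$ is soluble and in particular $q$-separable. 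Taking $a\in\ze{M}\setminus\{1\}$ we have $C_G(a)\supseteq M$ and hence the $q$-part of $|a^G|$ is $1$; since all non-trivial $q$-elements of $G$ are conjugate to $a$, every $q$-element has class size coprime to $q$, and Proposition~\ref{clave}(b) with $\pi=\{q\}$ yields that $M$ is abelian. Combined with exponent $q$, this makes $M$ elementary abelian.

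The main obstacle is this last commutativity step: merely fusing all non-trivial elements of $M$ in $G$ does not by itself force $M$ to be abelian, so routing through class-size arithmetic via Proposition~\ref{clave}(b) is essential, and that in turn relies on deriving the solubility of $G$ from $|\pi(G)|=3$ together with $p$-separability.
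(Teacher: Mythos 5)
Your overall route---working directly inside $H$ via Higman's theorem and class-size arithmetic, rather than the paper's minimal-counterexample argument with a minimal normal subgroup of $G$---is viable, and the second half of your argument (exponent $q$ for the kernel, excluding generalized quaternion complements, and abelianity of $M$ via Proposition \ref{clave}\,(b) after establishing solubility of $G$ from $|\pi(G)|\leq 3$) is essentially correct. However, there are two genuine gaps. The first is your opening deduction $H\cap\ze{G}\leqslant\ze{G}_{p'}=1$. When no non-central $p$-regular class has length coprime to $|B_0|$, the generating set in Proposition \ref{prop} is empty and $S=\langle\emptyset\rangle=1$, and the containment $\ze{G}_{p'}\leqslant S$ cannot be invoked in this degenerate situation: it would force $\ze{G}_{p'}=1$ for every group in which all non-central $p$-regular class lengths share a prime with $|B_0|$, and that is false. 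Indeed, the paper's own example in Remark \ref{remark_many_classes}, namely $G=C_2\times(Q_8\rtimes C_9)$ with $p=3$, satisfies the hypothesis of this very proposition (two adjacent vertices, both of length $6$) and has $H\cap\ze{G}\cong C_2\times C_2$. The sources cited for Proposition \ref{prop} establish $\ze{G}_{p'}\leqslant S$ only when such classes $D$ actually exist. You do need $H\cap\ze{G}=1$ in the branch $|\pi(H)|\geq 2$, and it is true there, but it requires an argument: Lemma \ref{lemma_centre} gives $|H\cap\ze{G}|\leq 2$, and the order-two case must then be excluded separately (the paper does this by showing $H$ would be $Q\times C_2$ with a central Sylow $2$-subgroup of $G$, so both vertices would be real classes of odd length whose representatives are forced to be involutions).

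The second gap is the dismissal of case (c) of Theorem \ref{CP} ``by maximality of $M$.'' In case (c) the quotient $H/M$ is a Frobenius group whose kernel is its normal Sylow $s$-subgroup, so $\rad{t}{H/M}=1$ and there is no conflict whatsoever with $M=\rad{t}{H}$ being the full $t$-core; maximality rules out nothing. Excluding case (c) needs a genuine argument---compare the paragraph the paper devotes to exactly this point in the proof of Proposition \ref{CP2}. In your two-vertex setting it can be repaired with the same tools you already use: all non-trivial $t$-elements of $G$ lie in a single class, so taking one in the centre of a Sylow $t$-subgroup $T\geqslant M$ of $G$ shows that $t$ does not divide that class length; Proposition \ref{clave}\,(b) then makes $T$ abelian, whence any $t$-element of $T\smallsetminus M$ centralises $M=\fit{H}$, contradicting $\ce{H}{\fit{H}}\leqslant\fit{H}$. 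With these two repairs your proof goes through, and it is a genuinely different (and arguably more self-contained) argument than the paper's induction on $|G|$ through $\Gamma_p(G/M)$.
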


\begin{proof}
Let $G$ be a counterexample of minimal order. We first suppose that $H/(H\cap\ze{G})$ is a $q$-group, for some prime $q$. In view of statement (a), $H\cap\ze{G}$ cannot be a $q$-group, so we may suppose that $H\cap\ze{G}$ is non-trivial. Lemma \ref{lemma_centre} yields $|H\cap\ze{G}|=2$. Thus $q$ is odd, and $H$ is a nilpotent $\{2,q\}$-group because $H/(H\cap\ze{G})$ is a $q$-group. We can then write $H=Q\times R$ where $R=\langle y\rangle \leqslant \ze{G}$ is cyclic of order $2$, and $Q$ is a $q$-group. Note that the two vertices of $\Gamma_p(G)$ are exactly $x^G$ and $(xy)^G$, where $x\in Q\smallsetminus \ze{G}$; moreover, both conjugacy classes are real because their representatives have different orders. Since $R$ is a central Sylow $2$-subgroup of $G$, then both class sizes are odd, so $x$ and $xy$ must be involutions which is not possible.

We suppose now that $H/(H\cap\ze{G})$ has not prime power order, so there exist two elements $x,y\in H\smallsetminus (H\cap \ze{G})$ such that $x$ is a $q$-element and $y$ is a $r$-element, for certain primes $q\neq r$. Due to the structure of $\Gamma_p(G)$, any other element of $H\smallsetminus (H\cap\ze{G})$ necessarily has order a $q$-number or a $r$-number, and $H\cap \ze{G}=1$. It follows that $H$ is a $\{q,r\}$-group with no elements of order divisible by $q$ and $r$ simultaneously. Note that $H$ is soluble, and since $G$ is $p$-separable by assumptions, then $G$ is also soluble.

Let $M$ be a minimal normal subgroup of $G$, which is either a $p$-group or a $p'$-group, and set $\overline{G}=G/M$. Let us first suppose that $M$ is a $p$-group. We are going to analyse three different possibilities for the graph $\Gamma_p(\overline{G})$, which certainly has at most two vertices, although we do not know a priori whether they are adjacent. If $\overline{x}^{\overline{G}}$ and $\overline{y}^{\overline{G}}$ are both non-central and adjacent in $\Gamma_p(\overline{G})$, then by minimality we necessarily get that $\overline{H}$ has the structure described in (a) or (b). But this is not possible because $\overline{H}$ is isomorphic to $H$. In addition, Proposition \ref{two-disconnected} yields that $\Gamma_p(\overline{G})$ cannot have two vertices and no edges. Hence either $\overline{x}^{\overline{G}}$ or $\overline{y}^{\overline{G}}$ must be central, so either $[x,H]\leqslant H\cap M=1$ or $[y,H]=1$, which cannot occur because $H$ has no element of order divisible simultaneously by $q$ and $r$. 

Finally we may suppose, up to interchanging $q$ and $r$, that $M$ is a $q$-elementary abelian group. As $M\nleqslant \ze{G}$ and $M$ is normal in $G$, it follows that $M=1\cup x^G\in\syl{q}{G}$ because $x^G$ contains all the $q$-elements of $H$. So $H=MR$ with $R\in\syl{r}{H}$, and all non-trivial elements of $R$ have order $r$. The fact that $H$ has no element of order divisible simultaneously by $q$ and $r$ implies that the action of $R$ on $M$ is Frobenius, and then $R$ must be cyclic of order $r$ because it is a Frobenius complement with all elements of order $r$. This is not possible due to statement (b), and the result is now proved.
\end{proof}

\medskip

\begin{remark}
\label{remark_many_classes}
In Proposition \ref{two-connected} (a) the group $G$ may have more than four $p$-regular classes and so it is difficult to detail further the structure of the $p$-complements of $G$. For instance, take $p=3$ and $G=C_2\times (Q_8\rtimes C_9)$, so $\Gamma_p(G)$ consists in two vertices with cardinalities $6$, but $H\cap\ze{G}=C_2\times C_2$ and thus there are $6$ $p$-regular classes.

However, in Proposition \ref{two-connected} (b) we can obtain the structure of $G/\rad{p}{G}$ by applying \cite{N1,N2}, because $G$ possesses exactly three $p$-regular classes, and therefore $G/\rad{p}{G}$ also has that feature. Note that the unique condition required to $G/\rad{p}{G}$ is to have at least three prime divisors, since $\Gamma_p(G/\rad{p}{G})$ may be connected or not, and even it may have less than two vertices. Hence cases (2), (11) and (12) of \cite[Theorem B]{N1}, and all cases in \cite[Theorem]{N2} may occur.
\end{remark}

\medskip

\begin{proposition}
\label{three-connected}
Let $G$ be a $p$-separable group for a fixed prime $p$, and let $H$ be a $p$-complement of $G$. If $\Gamma_p(G)$ is a tree of three vertices, then $p=3$ and $$G/\rad{p}{G}\cong (C_5 \times C_5) \rtimes SL(2,3),$$ where $SL(2,3)$ acts transitively on the non-trivial $5$-elements.
\end{proposition}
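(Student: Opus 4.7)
The plan is to prove the proposition in two main steps: first show that $H\cap\ze{G}=1$, so that $k_{p'}(G)=4$, and then apply the classification in \cite{T} for groups with four $p$-regular classes to identify $\overline{G}=G/\rad{p}{G}$. The first step is a self-contained parity-style argument using only the shape of $\Gamma_p(G)$; the second is a finite case check within Tiedt's list.

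For the first step, label the three non-central $p$-regular classes as $a^G\sim b^G\sim c^G$ with $a^G\not\sim c^G$, so $\gcd(|a^G|,|c^G|)=1$. For any $w\in H\cap\ze{G}$, the map $\psi_w\colon x^G\mapsto (xw)^G$ is a well-defined, class-size-preserving bijection on $\{a^G,b^G,c^G\}$, since $xw$ remains $p$-regular and non-central whenever $x$ is. Viewed in $\Sigma_3$, the permutation $\psi_w$ is the identity, a transposition, or a $3$-cycle, and I would rule out each non-trivial case. A $3$-cycle forces $|a^G|=|b^G|=|c^G|$, so $\gcd(|a^G|,|c^G|)=1$ collapses to $|a^G|=1$, contradicting non-centrality of $a$. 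A transposition swapping $a^G\leftrightarrow c^G$ gives the same collapse. A transposition fixing $a^G$ forces $|b^G|=|c^G|$, whence $\gcd(|a^G|,|b^G|)=\gcd(|a^G|,|c^G|)=1$ contradicts $a^G\sim b^G$; the symmetric case fixing $c^G$ contradicts $b^G\sim c^G$. If $\psi_w$ is the identity, then $(xw)^G=x^G$ for every $x\in\{a,b,c\}$, so $o(w)$ divides each $|x^G|$ and therefore $o(w)\mid\gcd(|a^G|,|c^G|)=1$, giving $w=1$. Thus $H\cap\ze{G}=1$ and $k_{p'}(G)=4$; by Lemma \ref{kpG} the same holds for $\overline{G}$, which has $\rad{p}{\overline{G}}=1$.

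With this reduction in hand, I would invoke \cite[Main Theorem]{T}. As recorded in the discussion preceding Proposition \ref{three-disconnected}, the cases in which a $p$-complement is not of prime-power order are precisely B)~1(c)--(d) and B)~4--6; the other cases have prime-power $p$-complements. For each candidate on Tiedt's list I would compute the non-central $p$-regular class sizes and the resulting common-divisor graph: the prime-power-complement cases yield graphs that are not three-vertex paths (either disconnected, of different order, or containing a triangle), and among the non-prime-power cases only B)~4 produces the required path. Case B)~4 corresponds to $\overline{G}\cong(C_5\times C_5)\rtimes SL(2,3)$ with $p=3$ and $SL(2,3)$ acting transitively on the non-trivial $5$-elements; here the three non-central $3$-regular class sizes are $24$ (for the $5$-elements), $150$ (for the order-$4$ elements), and $25$ (for the involutions), with $\gcd(24,25)=1$ and each of $24,25$ sharing a prime with $150$, exactly the required path. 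The main obstacle is this last enumeration inside Tiedt's classification; the $\psi_w$-argument in the previous paragraph is conceptually clean, but eliminating each competing candidate requires a careful, if routine, computation of its $p$-regular class sizes.
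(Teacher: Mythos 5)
Your first step is sound and is essentially the paper's: the authors observe that each of the two end vertices is the \emph{unique} class of its length (so your permutation $\psi_w$ is automatically the identity) and then run exactly your divisibility argument to get $H\cap\ze{G}=1$ and $k_{p'}(G)=4$. The second step, however, has two genuine gaps. First, you never show that $p$ is odd, and without that the appeal to \cite{T} is unavailable: Tiedt's classification concerns four $p$-regular classes for odd $p$, and the paper itself records that groups with four $2$-regular classes have not been classified. The paper closes this by noting that the two end vertices $x^G,y^G$ are real classes (inversion preserves order and class length, and each is the unique class of its length), that one of them has odd length since the lengths are coprime, and that a real class of odd length consists of involutions; hence $2\in\pi(H)$ and $p\neq 2$. (You also need solubility of $G$ to invoke \cite{T}; this follows as in the paper from Higman/Burnside once one knows $|\pi(H)|\leq 2$, or from Proposition \ref{soluble-prop}.)

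Second, your elimination strategy --- ``compute the non-central $p$-regular class sizes of each candidate $\overline{G}$ and check that the resulting graph is not a three-vertex path'' --- does not work as stated, because the hypothesis is on $\Gamma_p(G)$, not on $\Gamma_p(\overline{G})$, and $|g^G|$ can properly exceed $|\overline{g}^{\,\overline{G}}|$ (typically by a $p$-part). Only one implication transfers for free: a triangle in $\Gamma_p(\overline{G})$ lifts to a triangle in $\Gamma_p(G)$. A candidate whose own graph is disconnected or has repeated coprimality patterns is \emph{not} thereby excluded, since extra factors of $p$ in $G$ can create the missing adjacencies. This is why the paper's eliminations each need a bespoke argument: for B)\,1(c) and B)\,6 an inverse-class argument forcing two of the three lengths of $G$ to be equal (hence a triangle); for B)\,1(d) and B)\,5 a Sylow-conjugacy/divisibility argument showing that if $p$ divides one of a pair of lengths it divides both, again forcing a triangle or disconnectedness; and in the prime-power-complement case a separate reduction showing $H$ would be a $2$-group of exponent $4$ with non-central involutions, which rules out case A) ($\Sigma_4$, whose graph is a triangle) and case B)\,3 (a Suzuki $2$-group of type A, whose involutions are all central in $H$ and conjugate). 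Your identification and verification of case B)\,4 is correct, but the surrounding case analysis needs these additional ideas, not just a length computation in $\overline{G}$.
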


\begin{proof}
Let $x^G$ and $y^G$ be the two vertices of $\Gamma_p(G)$ such that $(|x^G|, |y^G|)=1$, and let $z^G$ be the vertex that is adjacent with both $x^G$ and $y^G$ in $\Gamma_p(G)$. If $a\in H\cap \ze{G}$, then $(ax)^G=ax^G=x^G$ since $x^G$ is the unique $p$-regular conjugacy class of $G$ with size $|x^G|$. It follows that $|\langle a\rangle|$ divides $|x^G|$, and analogously $|\langle a\rangle|$ divides $|y^G|$ too. Thus $H\cap\ze{G}=1$, and consequently $G$ has four $p$-regular classes. Moreover, both $x^G$ and $y^G$ are real classes, and some of them has odd length, so we may affirm that $2\in\pi(H)$ and $p$ is odd. 

If $H$ has prime power order, then $H$ is clearly a $2$-group. We claim that both $x$ and $y$ are involutions, and that the exponent of $H$ is $4$. First we show that both $x$ and $y$ are involutions. Otherwise we may suppose that $o(y)\geq 4$. Since $y$ and
$y^2$ are non-central elements of different orders, then $y^G\in\{x^G,z^G\}$ and we get a
contradiction as $|(y^2)^G|$ divides $|y^G|$. Note that this argument is also valid for $x$. Observe that $z$ cannot have order strictly larger than $4$, since otherwise we can form a triangle in $\Gamma_p(G)$. Besides, if $z$ has order two, then all elements of $H$ are involutions and $H$ would be abelian, which is not possible since it would imply that every $p$-regular class has $p$-power size and $\Gamma_p(G)$ would be a complete graph, a contradiction. 

If $H$ has not prime power order, then we claim that $|\pi(H)|=2$. Working by contradiction and recalling that $H\cap\ze{G}=1$, we may assume that $\pi(H)=\{q,r,s\}$ for pairwise different primes, all distinct from $p$, and that $x$ is a $q$-element, $z$ is a $r$-element, and $y$ is a $s$-element. It is easy to see that one of them, say $x$, necessarily has class size not divisible by some prime in $\pi(H)\smallsetminus\pi(o(x))$, say $r$, since otherwise we get a triangle in $\Gamma_p(G)$. It follows that there exists a non-trivial $\{q,r\}$-element $w\in G$, but $w^G$ is non-central and clearly different from $\{x^G,y^G,z^G\}$, a contradiction.

From the above paragraph we deduce that $H$ is soluble. Recall that $p$ must be odd. Certainly $\overline{G}=G/\rad{p}{G}$ has four $p$-regular conjugacy classes, so it remains to check which cases of \cite[Main Theorem]{T} may occur. We distinguish two cases: either $H$ has prime power order or it has not. In the former case, it is enough to look for the cases of \cite[Main Theorem]{T} such that $H$ is a $2$-group with exponent $4$. Case A) is discarded because $\overline{G}$ is isomorphic to $\Sigma_4$ with $p=3$, so $\Gamma_p(\overline{G})$ is a triangle and  by class length divisibility we would reach a contradiction. From the remaining cases of \cite[Main Theorem]{T} the unique possibility is B) 3. Nevertheless, in this last situation, $H$ is a Suzuki $2$-group of type A, so all the involutions belong to $\ze{H}$ and indeed they are conjugated, which certainly cannot occur. 

If $H$ has not prime power order, then the possibilities for $\overline{G}$ are B) 1 (c), 1 (d), 4, 5 and 6. In B) 1 (c), $\overline{G}$ has order not divisible by $p$, and $\overline{G}\cong H$ is an alternating group of degree $4$ with $p\neq 3$, so the non-central ($p$-regular) conjugacy classes are $\{(1,2)(3,4)^{H},(1,2,3)^{H},(1,2,4)^{H}\}$, whose sizes are $3$, $4$ and $4$ respectively. Observe that $(1,2,3)^{H}$ is the inverse class of $(1,2,4)^{H}$, and $H\cong \overline{G}$, so the corresponding class lengths in $G$ are also equal. But this cannot occur since $\Gamma_p(G)$ has no triangles. 

In B) 1 (d) we have that $G=\rad{p}{G}\rtimes H$ where $H\cong \overline{G}$ is isomorphic to a dihedral group of order $10$. Therefore the non-trivial ($p$-regular) classes of $\overline{G}$ are $\{\overline{u}^{\overline{G}},\overline{v}^{\overline{G}},\overline{w}^{\overline{G}}\}$ whose sizes are $5$, $2$ and $2$ respectively. In particular $\pi(u^G) \subseteq \{5,p\}$ and $\pi(v^G)\cup\pi(w^G)\subseteq \{2,p\}$. Consequently, since $\Gamma_p(G)$ is a path of length two, it must hold that $|u^G|$ and one of $\{|v^G|, |w^G|\}$, say $|v^G|$, are divisible by $p$. Since we may suppose up to conjugation that $\langle v\rangle=\langle w\rangle\in\syl{5}{G}$, then $p$ also divides $|w^G|$, so $\Gamma_p(G)$ is a triangle, a contradiction.

Let us consider now the case B) 6, so $p=5$ and $\overline{G}$ is isomorphic to $(C_2\times C_2\times C_2\times C_2)\rtimes C_{15}$, where $C_{15}$ permutes the involutions transitively. Hence the $p$-regular classes are $\{\overline{u}^{\overline{G}},\overline{v}^{\overline{G}},\overline{w}^{\overline{G}}\}$ whose sizes are $15$, $16$ and $16$ respectively. In fact $\overline{v}^{\overline{G}}$ and $\overline{w}^{\overline{G}}$ are inverse classes, so $|v^G|=|w^G|$, which is a contradiction since the graph $\Gamma_p(G)$ is triangle-free.

Finally, suppose that $\overline{G}$ has the structure described in B) 5, so it is isomorphic to $N\rtimes (P\times C_2)$, where $P$ is a cyclic $p$-group, $N$ is $q$-group, and $P\times C_2$ has two orbits on $N\smallsetminus \{1\}$, so $|N|=q^a$ and $q^a-1=4p^c$ for certain positive integers $a$ and $c$. Observe that $C_2$ inverts all elements of $N\smallsetminus \{1\}$, which yields that so $N$ is an elementary abelian $q$-group. Hence the $p$-regular classes of $\overline{G}$ are $\{\overline{u}^{\overline{G}},\overline{v}^{\overline{G}},\overline{w}^{\overline{G}}\}$ whose sizes are $2p^c$, $2p^c$ and $q^a$ respectively. By divisibility $\{2,p\}\subseteq \pi(u^G)=\pi(v^G)$, and equality holds because the Sylow $q$-subgroups are abelian. Now it follows that $\Gamma_p(G)$ is either non-connected or a triangle, a contradiction. This proves that $\overline{G}$ must have the structure of B) 4, as claimed.
\end{proof}

\section{Proof of Theorem \ref{teoA}}
\label{sec-A}

We notice that, in all cases described in the previous two sections, $G$ is indeed soluble. The result below shows that this fact holds when $\Gamma_p(G)$ has no triangles and $G$ is $p$-separable. We point out that the classification of finite simple groups is used, since the proof is based on \cite[Proposition 5]{BHM}.

\begin{proposition}
\label{soluble-prop}
If $G$ is a $p$-separable group such that $\Gamma_p(G)$ has no triangles, for some prime $p$, then $G$ is soluble.
\end{proposition}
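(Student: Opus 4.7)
The plan is to argue by contradiction, leveraging \cite[Proposition 5]{BHM} as the key external input: that proposition, which relies on the classification of finite simple groups, asserts that every non-abelian finite simple group admits three non-central conjugacy classes whose sizes are pairwise non-coprime (equivalently, its ordinary common divisor graph contains a triangle). Suppose, toward a contradiction, that $G$ is $p$-separable, $\Gamma_p(G)$ is triangle-free, and $G$ is non-soluble.

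First I would exploit $p$-separability to locate a non-abelian chief factor that is a $p'$-group. Indeed, every chief factor of a $p$-separable group is either an elementary abelian $p$-group or a $p'$-group, so non-solubility of $G$ forces some chief factor $N/M$ to be non-abelian, and this factor must then be a $p'$-group. As a chief factor, $N/M$ is a direct product of copies of a non-abelian simple $p'$-group $S$; in particular $\ze{N/M}=1$.

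Next I would apply \cite[Proposition 5]{BHM} to $N/M$ to obtain three non-central conjugacy classes $(x_1M)^{N/M}$, $(x_2M)^{N/M}$, $(x_3M)^{N/M}$ with pairwise non-coprime sizes. Since $N/M$ is a $p'$-group, each coset $x_iM$ is $p$-regular, so by the coset-lifting property for $p$-regular elements (part \emph{(c)} of the preliminary lemma) I may assume each $x_i\in N$ is itself a $p$-regular element of $G$.

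The final step is to transfer the triangle up to $\Gamma_p(G)$. The natural surjection $x_i^{N}\twoheadrightarrow (x_iM)^{N/M}$ sending $n\mapsto nM$ shows that $|(x_iM)^{N/M}|$ divides $|x_i^{N}|$, and $|x_i^{N}|$ in turn divides $|x_i^{G}|$ by part \emph{(a)} of the preliminary lemma; hence the three class sizes $|x_i^{G}|$ remain pairwise non-coprime. Moreover each $x_i$ is non-central in $G$, since $x_i\in\ze{G}$ would force $x_iM\in\ze{N/M}=1$, contradicting the non-triviality of $(x_iM)^{N/M}$. The classes $x_i^{G}$ therefore form a triangle in $\Gamma_p(G)$, contradicting the hypothesis. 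The only genuinely hard ingredient is the CFSG-based input from \cite{BHM}; once that is granted, the argument is a routine chief-factor lifting using only the elementary divisibility facts already recorded in the preliminaries.
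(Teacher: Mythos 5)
Your strategy is the same as the paper's: locate a non-abelian chief factor $N/M$, observe that $p$-separability forces it to be a $p'$-group, invoke \cite[Proposition 5]{BHM} to get a triangle in the ordinary graph of the simple pieces, and lift it via the divisibility relations of the preliminary lemma. The reduction to a $p'$ chief factor, the $p$-regular lifting via part (c), the divisibility chain $|(x_iM)^{N/M}| \mid |x_i^{N}| \mid |x_i^{G}|$, and the non-centrality argument are all correct as written.

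There is, however, one missing step: you never verify that the three $G$-classes $x_1^G$, $x_2^G$, $x_3^G$ are pairwise \emph{distinct}, and without that you have produced at most an edge, not a triangle. The classes $(x_iM)^{N/M}$ are distinct as classes of $N/M$, but $G$-conjugation may fuse them (for instance, the two classes of $5$-cycles of an $A_5$ factor, which do form part of a triangle in its complete graph, are fused in any overgroup inducing $S_5$ on that factor), in which case your three classes collapse to two or fewer vertices. The paper closes this gap by a specific choice of representatives: since a non-abelian simple group has at least three prime divisors and its common divisor graph is complete, one can pick three classes of $N_1/M$ whose representatives have prime order for three pairwise distinct primes $q_1,q_2,q_3\in\pi(N_1/M)$; taking the $q_i$-part of a preimage yields nontrivial $q_i$-elements of $G$ for distinct primes $q_i\neq p$, whose $G$-classes are then automatically distinct (elements of different orders are never conjugate), non-central and $p$-regular. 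With that one adjustment your argument is complete. A minor further point: \cite[Proposition 5]{BHM} concerns simple groups, so it should be applied to a single direct factor $N_1/M$ of the chief factor rather than to $N/M$ itself, with the three classes taken supported on that factor.
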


\begin{proof}
It is enough to show that any chief factor $N/M$ of $G$ with order not divisible by $p$ is soluble. By contradiction, suppose that $N/M$ is non-soluble. Hence it is isomorphic to the direct product $N_1/M\times \cdots \times N_t/M$, where all direct factors are isomorphic copies of a simple non-abelian group. By \cite[Proposition 5]{BHM} we know that $\Gamma(N_1/M)$ is complete, so we can build a triangle in such graph formed by three classes whose representatives have prime order, for three pairwise distinct primes. Since these elements are $p$-regular, and $N_1\unlhd N \unlhd G$, we may therefore deduce that $\Gamma_p(G)$ has a triangle, against our assumptions.
\end{proof}

The result below is a particular case of Theorem \ref{teoA}, namely when every $p$-regular element has prime power order.

\begin{proposition}
\label{CP2}
Let $G$ be a $p$-separable group, for some prime $p$, and let $H$ be a $p$-complement of $G$. If $\Gamma_p(G)$ has no triangles, and every element of $H$ has prime power order, then either $H$ is a $q$-group, or it is a Frobenius $\{q,r\}$-group for two different primes $q$ and $r$ both distinct from $p$.
\end{proposition}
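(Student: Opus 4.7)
The plan is to combine Proposition \ref{soluble-prop} with Higman's classification (Theorem \ref{CP}), plus the observation that the Fitting subgroup of an EPPO soluble group is a $q$-group for a single prime $q$.

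First, I would argue that $H$ is soluble. Since $G$ is $p$-separable and $\Gamma_p(G)$ has no triangles, Proposition \ref{soluble-prop} gives $G$ soluble, hence so is $H$. In particular, Higman's theorem (Theorem \ref{CP}) applies to $H$, which is a soluble EPPO group, and yields $|\pi(H)| \leq 2$. If $|\pi(H)| \leq 1$, then $H$ is a $q$-group (or trivial) and we are done, with $q \neq p$ because $H$ is a $p$-complement. Hence from now on I would assume $\pi(H) = \{q, r\}$ for two distinct primes $q, r \neq p$.

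Next, I would show that the Fitting subgroup $\fit{H}$ is a group of prime power order. Since $\fit{H}$ is nilpotent, $\fit{H} = \rad{q}{H} \times \rad{r}{H}$. If both direct factors were non-trivial, picking non-identity elements $x \in \rad{q}{H}$ and $y \in \rad{r}{H}$ would give a commuting pair, so $xy$ would have order divisible by both $q$ and $r$, contradicting EPPO. Without loss of generality $\fit{H} = \rad{q}{H} =: M \neq 1$. I now apply Theorem \ref{CP} to $H$ with this choice of $M$, which yields that either $H = M$ (impossible since $|\pi(H)|=2$), or $H/M$ falls into one of the cases (a), (b), (c).

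The heart of the argument is ruling out case (c) and handling (a), (b). In case (c), $H/M$ is a Frobenius group whose kernel $T/M$ is the (normal) Sylow $q$-subgroup of $H/M$ of order $q^a \geq q$. Then $T$ is a normal $q$-subgroup of $H$ strictly containing $M$, contradicting $M = \rad{q}{H}$. Thus only cases (a) and (b) remain, and in both, $H/M$ is an $r$-group (cyclic in (a); generalised quaternion with $q$ odd in (b)). Therefore $M$ is the Sylow $q$-subgroup of $H$, and if $R \in \syl{r}{H}$ then $H = M \rtimes R$. Finally, the EPPO property forces the action of $R$ on $M$ to be fixed-point-free: if $1 \neq k \in R$ centralised some $1 \neq m \in M$, then $mk$ would have order divisible by both $q$ and $r$. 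Hence $\ce{M}{k} = 1$ for every $1 \neq k \in R$, so $H$ is a Frobenius $\{q,r\}$-group with kernel $M$ and complement $R$, which is the second conclusion.

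I expect the main obstacle to be the elimination of case (c) of Higman's trichotomy; the key observation, once noticed, is that the Frobenius kernel in case (c) pulls back to a normal $q$-subgroup of $H$ properly containing $\rad{q}{H}$, which is the clean contradiction that closes the argument.
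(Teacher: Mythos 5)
Your reduction to Higman's theorem and your treatment of cases (a) and (b) are fine, but the elimination of case (c) --- which you correctly identify as the heart of the matter --- does not work. In case (c) the quotient $H/M$ is a Frobenius group of order $t^a s^b$ with $s=kt^a+1$, and its kernel is the Sylow $s$-subgroup, \emph{not} the Sylow $t$-subgroup: a Frobenius complement has order dividing $|\mathrm{kernel}|-1$, and $s^b\geq s>t^a-1$, so the $s$-part cannot act as a complement on a kernel of order $t^a$. Hence (with $t=q$, $s=r$) the preimage of the kernel is $R\,\rad{q}{H}$ for $R\in\syl{r}{H}$, which is not a $q$-group, and there is no contradiction with $M=\rad{q}{H}$.

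This is not a repairable slip, because case (c) cannot be excluded by group theory alone. For instance $E_{16}\rtimes(C_5\rtimes C_4)$, with $C_5$ acting fixed-point-freely on $E_{16}$ and $C_4$ acting faithfully on $C_5$, is a soluble $\{2,5\}$-group all of whose elements have prime power order, yet it is not Frobenius: a putative kernel would lie in $\fit{H}=E_{16}$, forcing a complement of order $20>15=|E_{16}|-1$. Your argument never uses the hypothesis that $\Gamma_p(G)$ is triangle-free, and that hypothesis is exactly what is needed at this point. The paper's proof spends essentially all of its effort here: assuming case (c), it shows that $r$ divides the $G$-class size of every non-trivial element of $\rad{q}{H}$ and of every $q$-element outside $\rad{q}{H}$, while $q$ divides the class size of every non-trivial element of $R$, concludes that $\Gamma_p(G)$ has exactly three vertices, and then obtains a contradiction from Propositions \ref{three-disconnected} and \ref{three-connected}. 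You need an argument of this kind to close the gap.
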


\begin{proof}
Suppose that $|H|$ is not a power of a prime. Since there is no element in $H$ with order divisible by two different primes, it follows $\ze{H}=1=H\cap\ze{G}$. Now Proposition \ref{soluble-prop} leads to the solubility of $G$, and so $H$ has the structure described in Theorem \ref{CP} (a), (b) or (c). Thus $|\pi(H)|=2$, and in the first two cases $H$ is a Frobenius group, so it is enough to show that statement (c) of that result cannot occur. 

Arguing by contradiction, let us suppose that $1\neq \rad{q}{H}$ for some prime $q$ and $H/\rad{q}{H}$ is a Frobenius group with (cyclic) kernel $K/\rad{q}{H}$. Certainly $K=R\rad{q}{H}$ where $R\in\syl{r}{H}$, for some prime $r\neq q$, and $K$ is a Frobenius group because every element of $H$ has prime power order. Moreover, $r$ divides $|b^G|$ for all $b\in \rad{q}{H}$, since otherwise there exists a $r$-element that commutes with $b$, which is not possible because the $p$-regular elements of $G$ all have prime power order. Using similar arguments, observe that $r$ also divides $|z^G|$ for any $q$-element $z\in H\smallsetminus \rad{q}{H}$, and $q$ divides $|a^G|$ for every element $a\in R$. 

We claim that the unique vertices of $\Gamma_p(G)$ are $\{a^G, b^G, z^G\}$. In fact, if there exists another vertex $c^G$ of the graph $\Gamma_p(G)$, then $c$ is either a $q$-element or a $r$-element. The former case, analogously as before, yields that $r$ divides $|c^G|$, so the classes $\{c^G, b^G, z^G\}$ form a triangle in $\Gamma_p(G)$, a contradiction. Hence we may suppose that $c$ is a $r$-element, and $q$ divides $|c^G|$. Since $\Gamma_p(G)$ has no triangles, we deduce that $q$ does not divide the class size in $G$ of any $q$-element of $H$, and therefore the same holds for any $q$-element of $G$. By Proposition \ref{clave} (b) we get that a Sylow $q$-subgroup of $H$ is abelian. Since every element of $H$ has prime power order, and $\rad{q}{H}\neq 1$, then $\rad{r}{H}=1$ and $\fit{H}=\rad{q}{H}$, so $z\in\ce{H}{\rad{q}{H}}\leqslant\rad{q}{H}$, a contradiction.

Thus the unique vertices of $\Gamma_p(G)$ are $\{a^G, b^G, z^G\}$. As the graph has no triangles and it has at most two connected components, then we are in situation to apply either Proposition \ref{three-disconnected} or Proposition \ref{three-connected}, and both results yield the final contradiction.
\end{proof}

\medskip

We are now ready to prove Theorem \ref{teoA} of the Introduction.

\medskip

\begin{proof}[Proof of Theorem \ref{teoA}]
Let us suppose that $H$ is a non-central $p$-complement of $G$, and that $\Gamma_p(G)$ has no triangles. Recall that $G$ is soluble by Proposition \ref{soluble-prop}. Certainly, if $\Gamma_p(G)$ is non-connected, then the claims follow by Corollary \ref{cor_disconnected_2}. As a consequence we may suppose henceforth that $\Gamma_p(G)$ is connected. Moreover, if $\Gamma_p(G)$ has at most three vertices, then we are done by Propositions \ref{unique_vertex}, \ref{two-connected} and \ref{three-connected}. Consequently it is enough to show that $\Gamma_p(G)$ cannot have more than three vertices. Arguing by contradiction, let us suppose that it possesses at least four vertices. We distinguish two cases.

\medskip

\noindent \textbf{\underline{Case I.}} There is some element in $H/(H\cap\ze{G})$ whose order is not a prime power.

\medskip

We claim that this case is not possible. Take $x(H\cap\ze{G})$ in $H/(H\cap\ze{G})$ of order divisible by exactly two different primes $q$ and $r$. Let us consider the decomposition $x=x_qx_r$ into its $q$-part and its $r$-part, being both clearly non-central in $G$. It follows that the class sizes in $G$ of $x_q$ and $x_r$ divide the class size of $x$, and these three classes are certainly different, so $\Gamma_p(G)$ has a subgraph of the form $$x_q^G\mbox{ --- }x^G\mbox{ --- }x_r^G.$$ Since there are no triangles in the graph, then $|x_q^G|$ and $|x_r^G|$ are coprime, and $G=\ce{G}{x_q}\ce{G}{x_r}$. It follows that $|x^G|=|x_q^G|\cdot|x_r^G|$, and thus any different vertex of $\Gamma_p(G)$ yields a triangle, which cannot occur. 

\medskip

\noindent \textbf{\underline{Case II.}} Every element of $H/(H\cap\ze{G})$ has prime power order.

\medskip

Let us first suppose that $H\cap\ze{G}>1$. We may assume that $\Gamma_p(G)$ contains a subgraph of the form $A\mbox{ --- }B\mbox{ --- }C$, where $A$ and $C$ are certainly the unique $p$-regular classes with cardinalities $|A|$ and $|C|$, respectively, because the graph has no triangles. Take a non-trivial $z\in H\cap\ze{G}$. Hence $zA=A$ and $zC=C$, so we deduce that $|\langle z\rangle|$ divides both $|A|$ and $|C|$, which is not possible. 

Now we suppose $H\cap \ze{G}=1$, so $|H|$ is divisible by at most two primes $q\neq r$ by Theorem \ref{CP}. Proposition \ref{CP2} yields that either $|H|$ is a prime power, or $H$ is a Frobenius $\{q,r\}$-group. Since $\pi(G)\subseteq \{p,q,r\}$, and $\Gamma_p(G)$ is connected, triangle-free, and its diameter at most $3$ (\emph{cf.} \cite[Theorem 2]{BF1}), then we cannot have more than four vertices: otherwise there must exist a vertex with size divisible by $pqr$, which would form a triangle with two of the remaining vertices in $\Gamma_p(G)$. We next analyse the two possible trees of four vertices and we will show that none of them can occur. Observe that the structure of such trees forces $\pi(H)=\{q,r\}$, so $|H|$ cannot be a prime power.

If $\Gamma_p(G)$ is a tree with the following structure \begin{eqnarray*} b^G\mbox{ --- } & a^G & \mbox{ --- }c^G \\
& \rotatebox{90}{---} & \\
& d^G & \end{eqnarray*} then $\pi(a^G)=\{p,q,r\}$, and we may assume that $\pi(d^G)=\{p\}$. It follows that $|d^G|$ is not divisible by either $q$ or $r$, which is not possible, because $H$ does not contain any element of order divisible by two different primes. 

Finally, if $\Gamma_p(G)$ has the structure $$a^G\mbox{ --- }b^G\mbox{ --- }c^G\mbox{ --- }d^G,$$ since there are no $p$-regular classes with size a $p$-power by a similar previous argument, then it necessarily follows $\pi(a^G)=\{q\}$, $\pi(b^G)=\{p,q\}$, $\pi(c^G)=\{p,r\}$ and $\pi(d^G)=\{r\}$. Observe that $a$ and $b$ are $r$-elements, and $c$ and $d$ are $q$-elements, because $H$ does not have elements of orders divisible by two primes. We deduce by Proposition \ref{clave} (b) that $G$ has abelian Sylow $q$-subgroups and Sylow $r$-subgroups. Let $Q\in\syl{q}{H}$ and $R\in\syl{r}{H}$, which are both abelian, and we may assume $a,b\in R$ and $c,d\in Q$. We have seen above that $H$ is a Frobenius $\{q,r\}$-group, so $|a^G|=|Q|$, $|b^G|=|Q|\cdot p^{\alpha}$, $|c^G|=|R|\cdot p^{\beta}$ and $|d^G|=|R|$, for certain integers $\alpha,\beta>0$. The number of $r$-elements of $G$ is therefore $1+|Q|+|Q|\cdot p^{\alpha}=1+|Q|\cdot (1+p^{\alpha})$. Analogously, the number of $q$-elements of $G$ is $1+|R|\cdot (1+p^{\beta})$. Note that all $p$-regular conjugacy classes of $G$ are real, and in particular some of them has odd length, which implies that $2\in\pi(H)$ and so $p$ is odd. Hence the number of $q$-elements of $G$, and the number of $r$-elements of $G$, are both odd. But these numbers must be multiple of $|R|$ and $|Q|$, respectively, by a celebrated theorem due to Frobenius (\emph{cf.} \cite[Theorem 9.1.2]{Hall}). Hence we have obtained a contradiction to the fact $2\in\pi(H)$. This final contradiction proves the theorem.
\end{proof}


\bigskip

\noindent\textbf{Acknowledgements.} The authors would like to thank H. Tong-Viet and J. Cossey  for helpful discussions about Proposition \ref{three-connected}, and the referees for their valuable comments and suggestions.


\end{document}